\theoremstyle{plain}
\newtheorem{thm}{Theorem}
\newtheorem{lem}{Lemma}
\theoremstyle{definition}
\newtheorem{prop}{Proposition}
\newtheorem{cor}{Corollary}
\title{Region crossing change on origami and link}
\author{Tokio Oshikiri\thanks{College of Science and Engineering, Kanazawa University, Kakuma-machi, Kanazawa-shi, Ishikawa 920-1192, Japan.} 
\and Ayaka Shimizu\thanks{Osaka Central Advanced Mathematical Institute (OCAMI), Osaka Metropolitan University, 3-3-138, Sugimoto, Osaka 558-8585, Japan. shimizu1984@gmail.com}
\and Junya Tamura\thanks{Taiyo Yuden Co., LTD. Tamamura Plant, 1796-1, Kawai, Tamamura-machi, Gunma 370-1196, Japan. } }
\date{\today}
\begin{document}
\maketitle

\begin{abstract}
Region Select is a game originally defined on a knot projection. 
In this paper, Region Select on an origami crease pattern is introduced and investigated. 
As an application, a new unlinking number associated with region crossing change is defined and discussed.
\end{abstract}

\section{Introduction}

Let $D$ be a knot diagram on $S^2$. 
Let $R$ be a region of $D$, namely a connected portion of $S^2$ divided by $D$. 
A {\it region crossing change} at a region $R$ is an operation on $D$ which changes all the crossings on the boundary of $R$. 
In this paper, we denote a region crossing change by the abbreviation RCC. 

{\it Region Select} is a game on a knot projection which has a lamp with the status of ON or OFF at each crossing with the following rule. 
When one selects a region, then all the lamps on the boundary of the region are switched. 
The goal of the game is to light up all the lamps for given knot projection with an initial status of lamps (see Figure \ref{fig-rs-k}). 
Using results of the study of RCC, it has been shown that the Region Select is solvable for any knot projection with lamps.

\begin{figure}[h]
\centering
\includegraphics[width=100mm]{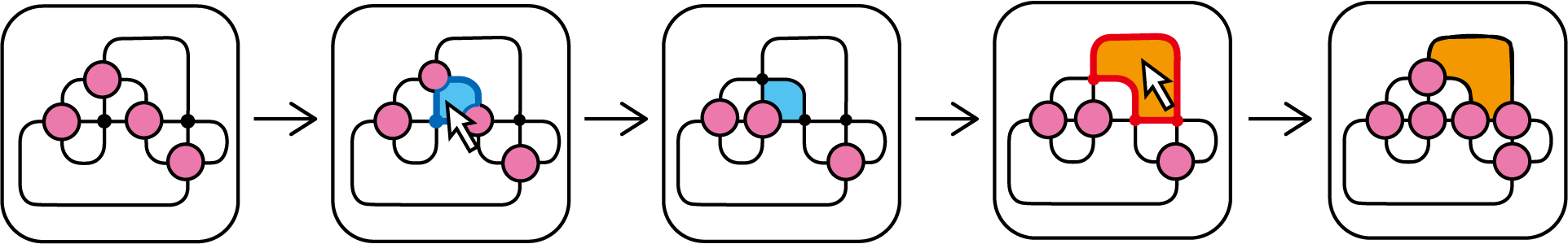}
\caption{Region Select on a knot projection.}
\label{fig-rs-k}
\end{figure}

In this paper, the Region Select on origami is defined and discussed.
By folding a square sheet of paper $S$ and open it, we have a crease pattern, and by giving a lamp at each intersection point of creases, we obtain a game board of the Region Select. 
Region Select on origami is not always solvable. 
For example, we cannot clear the game depicted in Figure \ref{fig-ors-ex}. 
In Section \ref{section-ors}, some conditions to switch a single lamp on crease patterns with vertices of degree four will be discussed.

\begin{figure}[h]
\centering
\includegraphics[width=16mm]{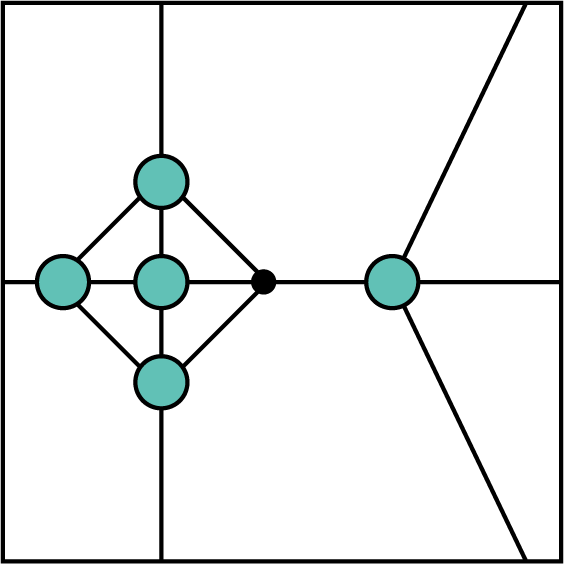}
\caption{Region Select on a crease pattern of origami.}
\label{fig-ors-ex}
\end{figure}

In \cite{rcc-uo}, it was shown that a RCC is an unknotting operation on a knot diagram. 
Then, the {\it region unknotting number}, $u_R(D)$, of a knot diagram $D$ was defined to be the minimal number of RCCs to obtain a diagram of the trivial knot from $D$. 
In \cite{aida}, it was shown by Aida that any knot has a diagram which is transformed into a diagram of the trivial knot by a single ``$n$-gon move''. 
Since an $n$-gon move is a kind of region crossing change, we have the following: 

\vspace{2mm}
\begin{thm}[\cite{aida}]
Every nontrivial knot has a diagram whose region unknotting number is one. 
\end{thm}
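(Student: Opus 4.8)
The plan is to derive the statement directly from Aida's result quoted above, since the substantive work --- producing, for each knot, a diagram that a single polygonal move trivializes --- is exactly what that result supplies. So the real task here is only the bookkeeping that converts an ``$n$-gon move'' into a statement about the invariant $u_R$.

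First I would fix a nontrivial knot $K$ and invoke Aida's theorem to obtain a diagram $D$ of $K$ together with an $n$-gon region $R$ of $D$ such that changing all $n$ crossings on $\partial R$ produces a diagram $D'$ of the trivial knot. Next I would observe that this operation is precisely the region crossing change of $D$ at $R$ --- this is the content of the remark that an $n$-gon move is a kind of region crossing change, since an $n$-gon move changes exactly the crossings on the boundary of a chosen $n$-gon face, and an RCC at $R$ changes exactly the crossings on $\partial R$. Hence $D$ is carried to a diagram of the trivial knot by a single RCC, so $u_R(D)\le 1$. For the reverse inequality I would note that $u_R(D)=0$ would force $D$ itself to be a diagram of the trivial knot, contradicting that $D$ represents the nontrivial knot $K$; therefore $u_R(D)\ge 1$. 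Combining the two bounds gives $u_R(D)=1$, and since $K$ was an arbitrary nontrivial knot this proves the theorem.

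The only genuine mathematical difficulty is the one hidden inside Aida's theorem, namely that every knot admits \emph{some} diagram in which an unknotting collection of crossing changes can all be realized on the boundary of a single region. The natural strategy there would be to start from an arbitrary diagram, select an unknotting set of crossings (such a set exists for every diagram, for instance by making the diagram ascending from a chosen base point), and then, by Reidemeister/finger moves, shepherd those crossings onto the boundary of one common face while controlling the auxiliary crossings thereby introduced. Since we are permitted to cite Aida's result as a black box, I would not re-derive this; given that input, the remaining steps above are routine and I anticipate no further obstacle.
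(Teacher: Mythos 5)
Your proposal is correct and matches the paper's own (implicit) argument: the paper likewise derives the theorem directly from Aida's result by observing that an $n$-gon move is a region crossing change at a single region, which gives $u_R(D)\le 1$, with $u_R(D)\ge 1$ forced by nontriviality of the knot. The paper offers no further detail, so your bookkeeping is, if anything, slightly more explicit than the original.
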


\vspace{2mm}

\noindent For links, a RCC is not an unlinking operation, that is, we cannot use RCC as a tool to transform any link into a trivial link (\cite{cheng-gao}). 
In this paper, using the concept of origami Region Select, we will define the {\it circled region unlinking number} for link diagrams in Section \ref{section-li} and prove the following theorem: 

\vspace{2mm}

\begin{thm}
Every nontrivial link has a diagram whose circled region unlinking number is one. 
\label{thm-cru-one}
\end{thm}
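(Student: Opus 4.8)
The plan is to adapt, to links, the idea behind Aida's theorem: produce a diagram of the given link together with a single region of a ``circled'' diagram whose region crossing change trivializes it, the trivializing set of crossings being dictated by a descending argument. Write $L$ for the given nontrivial link. In accordance with the origami picture I assume that a \emph{circled region crossing change} on a link diagram $D$ amounts to choosing a simple closed curve $\gamma$ transverse to $D$ and missing its crossings, choosing a region $R$ of $D\cup\gamma$, and switching exactly those crossings of $D$ lying on $\partial R$ (the points of $\gamma\cap D$ carry no lamp, just as a crease meeting the boundary of the paper carries none); the \emph{circled region unlinking number} of $D$ is the least number of such moves needed to reach a trivial link diagram.

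First I would record two facts that use no circle at all. \emph{(i)} For any diagram $D'$ of $L$ there is a set $C$ of crossings of $D'$ whose simultaneous switch yields a diagram of the trivial link: order the components, choose base points and orientations, and let $C$ be the set of crossings violating the descending condition, since a descending diagram of an $n$-component link is the split union of $n$ trivial knots. \emph{(ii)} Every link has a connected diagram $D$ having a face $F$ incident to \emph{all} of its crossings. For \emph{(ii)}, start from any connected diagram, fix a face $F_0$, and for each crossing $c$ not on $\partial F_0$ perform a finger move: push an arc of $D$ lying on $\partial F_0$ through the complementary faces until it hugs a quadrant of $c$, doing a Reidemeister II move whenever the finger meets a strand and routing the finger through edge-interiors only. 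Such a move enlarges $F_0$, makes $c$ and all newly created crossings incident to the enlarged $F_0$, and removes nothing from $\partial F_0$; after finitely many moves $F_0$ meets every crossing, and the result $D$ is still a diagram of $L$.

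The heart of the argument is then the following: if all crossings in a set $C\subseteq V(D)$ of crossings lie on the boundary of a common face $F$, then some circled region crossing change switches exactly the crossings in $C$. I would build the region as a ``caterpillar'' inside $F$: for each $c\in C$ take a tiny region contained in one $F$-quadrant of $c$ and having $c$ as a corner (so the region occupies exactly one, hence an odd number, of the quadrants at $c$, which also handles faces that wrap twice around a crossing), and join these tiny regions by thin tubes running in the interior of $F$. Their union $R$ is an embedded disk whose boundary alternates between short arcs of $D$ (two issuing from each $c\in C$) and arcs disjoint from $D$; letting $\gamma$ be the simple closed curve that follows the latter arcs and, alongside the former, runs just off $D$, one obtains $\gamma$ transverse to $D$, avoiding its crossings, with $R$ a region of $D\cup\gamma$ and $\partial R\cap V(D)=C$. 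Because $R$ meets $\partial F$ only near the corners and the tubes stay in the interior of $F$, no crossing outside $C$ is touched, so switching at $R$ performs precisely the crossing changes along $C$.

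Finally I would assemble these: take the diagram $D$ of $L$ from \emph{(ii)}, let $C$ be a trivializing crossing set for $D$ from \emph{(i)}; all crossings of $C$ lie on $\partial F$, so the construction above produces a single circled region crossing change turning $D$ into a diagram of the trivial link. Hence the circled region unlinking number of $D$ is at most one, and it is nonzero since $L$ is nontrivial, so it equals one. The step I expect to be the main obstacle, and the one to write most carefully, is the middle one: checking that the caterpillar region genuinely realizes the toggle pattern $C$ (parity at each crossing of $C$, and nothing else touched) and that the auxiliary curve can be taken to be a single simple closed curve transverse to $D$ — the routing of the tubes inside $F$ and the precise ``push just off $D$'' description of $\gamma$ must be spelled out, and one should confirm that the definition of a circled region crossing change really does ignore the points of $\gamma\cap D$.
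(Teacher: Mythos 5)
Your overall architecture --- find a trivializing set of crossings, arrange for them all to abut a single region, and switch them simultaneously by one circled region crossing change --- is the same as the paper's, and your step \emph{(i)} and your ``caterpillar'' region are sound (the paper achieves the latter more simply by placing the circle so that the relocated crossings surround one small region inside it). The genuine gap is in step \emph{(ii)}. A finger move does \emph{not} enlarge $F_0$ past the strands it pushes through: if the pushed arc crosses a strand $s$ by a Reidemeister II move, then $s$ runs across the interior of the finger and separates everything beyond $s$ from $F_0$. Hence the enlarged $F_0$ stops at the first strand the finger meets; the tip of the finger, and therefore the target crossing $c$ that it hugs, lies on the boundary of a \emph{different} face, and of the newly created crossings only the innermost pair is incident to the enlarged $F_0$. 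So the claim that each finger move ``makes $c$ and all newly created crossings incident to the enlarged $F_0$'' is false, the induction in \emph{(ii)} does not terminate as described, and you never actually establish that the crossings of your trivializing set lie on a common face --- which is precisely the hypothesis your caterpillar construction requires.

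The paper's fix is to move the crossings rather than the face: place a small circle $C$ in an arbitrary region and drag each trivializing crossing $c_i$ into the disk bounded by $C$ along a path, letting the dragged strands pass \emph{over} every arc of $D$ they meet (the ``spur'' of \cite{FNS}, \cite{rinno}). This modifies the diagram only by Reidemeister II moves, so it still represents $L$; the relocated crossings can be arranged around a single region inside $C$, and the extra crossings created by the spurs are harmless because after the single RCC the spurs retract, yielding exactly the diagram obtained from $D$ by changing $c_1,\dots,c_n$. Note that you only ever need the \emph{trivializing} crossings, not all crossings, to share a face; if you replace your step \emph{(ii)} by this relocation, the caterpillar becomes unnecessary and your argument closes.
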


\vspace{2mm}

\noindent The rest of the paper is organized as follows: 
In Section \ref{section-rcc}, RCC on knot diagrams and some properties are discussed. 
In Section \ref{section-ors}, Region Select game on origami crease patterns is proposed and investigated. 
In Section \ref{section-li}, a new unlinking operation, the circled region crossing change is introduced and the proof of Theorem \ref{thm-cru-one} is given.

\section{Region crossing change}
\label{section-rcc}

In this section, some properties of RCC on knot diagrams are discussed. 

\subsection{Changeable crossings}

\noindent A crossing $c$ of a knot diagram $D$ is said to be {\it changeable by RCC} if it is possible to change only $c$ by RCCs.
The following theorem was shown in \cite{rcc-uo}:

\vspace{2mm}

\begin{thm}[\cite{rcc-uo}]
Any crossing of a knot diagram is changeable by RCCs. 
\label{thm-rcc-cc-k}
\end{thm}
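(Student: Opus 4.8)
The plan is to translate the statement into linear algebra over $\mathbb{F}_2$ and then isolate the combinatorial fact that makes it work. Write $c_1,\dots,c_n$ for the crossings of $D$ and attach to each region $R$ an \emph{incidence vector} $v_R\in\mathbb{F}_2^{\,n}$ whose $i$-th coordinate is the number of corners of $R$ at $c_i$, taken mod $2$. Performing an RCC at $R$ toggles exactly the crossings recorded by $v_R$, and performing RCCs at a family $\mathcal{S}$ of regions toggles those recorded by $\sum_{R\in\mathcal{S}}v_R$. Thus Theorem~\ref{thm-rcc-cc-k} is equivalent to the assertion that the $\mathbb{F}_2$-span $W=\langle v_R : R\text{ a region of }D\rangle$ equals $\mathbb{F}_2^{\,n}$, i.e.\ that every standard basis vector $e_i$ lies in $W$; dualizing over $\mathbb{F}_2$, this is the same as saying there is \emph{no} nonempty set $T$ of crossings such that every region of $D$ has an even number of its corners among the crossings of $T$.

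Next I would record the obvious constraints and the easy reductions. Fixing a checkerboard colouring of $D$ and using that the four corners at each crossing are coloured black, white, black, white cyclically, the sum of $v_R$ over the black regions is $0$, and likewise over the white regions; since $D$ has $n+2$ regions on $S^2$, these two relations already give $\dim W\le n$, so what must be shown is that they are the only relations. On the reduction side, if $D$ has a monogon at a crossing $x$ then that monogon $R$ satisfies $v_R=e_x$, so $e_x\in W$ immediately; an $R1$-reduction at $x$ moreover lets one pull back to $D$, from the smaller diagram, a family of regions realizing any other $e_c$ (replace the region produced by merging the monogon with its neighbour by that neighbour alone). So by induction on the number of crossings we may assume $D$ has no monogon, i.e.\ that $D$ is reduced.

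The heart of the matter is then the dual statement for a reduced knot diagram: no nonempty crossing set $T$ makes every region even. This is precisely where the hypothesis that $D$ is a \emph{knot} diagram — a single immersed circle — is indispensable: the analogous claim is false for link diagrams, which is exactly why an RCC is not an unlinking operation (\cite{cheng-gao}) and why Section~\ref{section-li} must introduce the circled RCC. My approach would be to argue that a putative such $T$ forces the edges of $D$ running between $T$-crossings to assemble into a nontrivial closed curve on $S^2$ meeting $D$ only at $T$, which is impossible for a connected diagram; concretely, one traverses the knot once and plays the corner-parity condition at each region against the two passages of the knot through each crossing of $T$. I expect this curve-tracing/parity step to be the real obstacle: the $\mathbb{F}_2$ set-up and the $R1$ reduction are routine, whereas extracting a contradiction from connectedness uniformly over all reduced diagrams — alternating or not, prime or not — is the substantive work.
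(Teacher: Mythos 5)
Your $\mathbb{F}_2$ framing is correct and you have located the difficulty accurately, but the proposal stops precisely at the point where the theorem's content lies. Everything before the last paragraph (the incidence vectors, the two checkerboard relations giving $\dim W\le n$, the $R1$ reduction to a reduced diagram) is routine bookkeeping; the theorem \emph{is} the statement that for a reduced knot diagram no nonempty crossing set $T$ leaves every region even, and for that you offer only an intention (``my approach would be to argue that\dots'', ``I expect this\dots to be the real obstacle''), not an argument. The proposed mechanism --- that such a $T$ forces the edges between $T$-crossings to assemble into a closed curve meeting $D$ only at $T$ --- is unsubstantiated: nothing in the proposal explains how the even-corner condition at every region produces such a curve, and the analogous description of $W^{\perp}$ for link diagrams (spanned by the sets of crossings between pairs of components, cf.\ \cite{cheng-gao}) is itself a theorem of comparable depth. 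As written, this is a correct reduction plus an unproved key lemma, i.e.\ not a proof.

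It is also the harder route. The proof in \cite{rcc-uo}, which the present paper reproduces in the more general setting of Proposition \ref{lem-ki}, is constructive and bypasses the dual statement entirely: to realize $e_c$, splice the diagram at $c$ into a two-component projection, checkerboard-colour the projection of one of the two components, and take the set of regions of $D$ contained in the shaded regions; every crossing other than $c$ is then either a self-crossing of one component or a crossing between the two and so receives an even number of shaded corners, while $c$ receives an odd number (with a correction at reducible crossings). Exhibiting $e_c\in W$ for every $c$ gives $W=\mathbb{F}_2^{\,n}$ directly. A secondary point to repair if you pursue your route: your incidence vector counts \emph{corners} mod $2$, while the paper's RCC changes each crossing \emph{on the boundary} of $R$ exactly once; these conventions disagree exactly when a region meets a crossing at two corners (e.g.\ the region outside a kink), so your $R1$ pull-back step, as stated, toggles the reduced crossing incorrectly in the paper's convention and needs an extra monogon correction.
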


\vspace{2mm}

\noindent For links, the following theorem was shown in \cite{cheng-gao}: 

\vspace{2mm}

\begin{thm}[\cite{cheng-gao}]
For a link diagram, any self-crossing of a knot component is changeable by RCCs. 
Any crossing between different components is not changeable by RCCs. 
\label{thm-rcc-cc-l}
\end{thm}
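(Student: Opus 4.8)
The plan is to recast changeability as a linear-algebra question over $\mathbb{F}_2=\mathbb{Z}/2\mathbb{Z}$ and then treat the two assertions by opposite methods: for self-crossings I will reduce to the knot case via Theorem \ref{thm-rcc-cc-k}, while for inter-component crossings I will exhibit an explicit obstruction. Write the link as $K_1\cup\cdots\cup K_\mu$ and let $n$ be the number of crossings of $D$. Regard each crossing as a coordinate of $\mathbb{F}_2^{\,n}$, and to each region $R$ associate its \emph{region vector} $v_R\in\mathbb{F}_2^{\,n}$ whose $c$-entry is the number of local quadrants at $c$ lying in $R$, reduced modulo $2$ (equivalently, the multiplicity of $c$ as a corner of $R$). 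Performing RCC at $R$ adds $v_R$ to the current crossing state, so a crossing $c$ is changeable by RCCs exactly when the standard basis vector $e_c$ lies in the span $W=\langle v_R\rangle_R$. Dually, $e_c\notin W$ if and only if there is a linear functional $\phi$ on $\mathbb{F}_2^{\,n}$ with $\phi(v_R)=0$ for every region $R$ but $\phi(e_c)=1$.

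For the inter-component statement, let $c$ be a crossing between $K_1$ and $K_2$. The natural first guess, the functional counting $K_1$--$K_2$ crossings only, does \emph{not} work: a region may meet an odd number of them (for instance the central triangle in a diagram of three pairwise-crossing circles). The fix, and the key step, is to take $\phi(x)=\sum_{c'} x_{c'}$ summed over \emph{all} crossings where $K_1$ meets some other component; since $c$ is one of these, $\phi(e_c)=1$. I then want the Lemma that every region $R$ is incident (with multiplicity, mod $2$) to an even number of such crossings, so that $\phi(v_R)=0$. I would prove this by traversing each boundary circle of $R$ and labelling its cyclic sequence of edges $e_1,\dots,e_m$ by $\theta_k=1$ if $e_k$ lies on $K_1$ and $\theta_k=0$ otherwise; a binary function on a cycle has an even number of sign changes, and a change at the corner $c_k$ occurs precisely when exactly one of the two boundary edges there lies on $K_1$. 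Because the two boundary edges at any corner belong to the two \emph{distinct} strands of that crossing, such a change happens exactly when $c_k$ is a crossing of $K_1$ with a different component. Summing the even counts over all boundary circles of $R$ gives the Lemma, hence $e_c\notin W$ and $c$ is not changeable.

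For the self-crossing statement, let $c$ be a self-crossing of $K_i$, and let $D_i$ be the knot diagram obtained by deleting every component except $K_i$. By Theorem \ref{thm-rcc-cc-k}, $c$ is changeable in $D_i$: there is a set $\mathcal{S}'$ of regions of $D_i$ whose region vectors sum to $e_c$. Each region of $D_i$ is a union of regions of $D$, so I lift $\mathcal{S}'$ to the set $\mathcal{S}$ of all regions of $D$ contained in a member of $\mathcal{S}'$ and claim that RCC over $\mathcal{S}$ changes only $c$. For a self-crossing $c''$ of $K_i$ the four local quadrants are untouched by the extra components, so its toggle count over $\mathcal{S}$ equals its toggle count over $\mathcal{S}'$ in $D_i$, namely $1$ if $c''=c$ and $0$ otherwise. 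For any crossing $c''$ meeting a strand not on $K_i$, the point is that $c''$ is toggled an even number of times: if $c''$ lies off $K_i$ all four of its quadrants lie in one region of $D_i$ (toggle $0$ or $4$), while if $c''$ is a $K_i$--$K_j$ crossing then $K_i$ splits its neighbourhood into two half-discs, each contributing its two quadrants to a single region of $D_i$ (toggle $0$, $2$, or $4$). Thus only $c$ is changed, and $c$ is changeable.

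I expect the main obstacle to be the inter-component half, specifically identifying the correct obstruction: the parity fails for the pair-specific count, and recognizing that one must sum over \emph{all} crossings of $K_1$ with the rest of the link --- equivalently, that the mod-$2$ total linking number of $K_1$ with $K_2\cup\cdots\cup K_\mu$, read off the diagram, is preserved by every single RCC --- is the crux. The accompanying parity Lemma, and in the self-crossing half the verification that collateral crossings are toggled an even number of times under the lift, are the places that need the careful case analysis sketched above.
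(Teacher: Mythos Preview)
The paper does not give its own proof of this theorem: it is simply cited from Cheng--Gao. So there is no paper-proof to compare against directly. That said, the methods you use are precisely the ones the paper borrows from Cheng--Gao and from \cite{rcc-uo} when proving the analogous origami statements: your mod-$2$ linking obstruction for inter-component crossings is exactly the ``lamp-linking number'' invariant the paper deploys in Proposition~\ref{lem-ki-other}, and your delete-and-lift strategy for self-crossings is the same reduction that underlies the paper's Proposition~\ref{lem-ki}. In that sense your proposal is on target.

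There is one genuine technical slip worth flagging. Your region vector $v_R$ records the quadrant multiplicity of $R$ at each crossing mod~$2$, and you assert that RCC at $R$ adds $v_R$ to the state. But the paper's RCC toggles every crossing on $\partial R$ exactly once, so the correct vector is $w_R$ with $w_R(c)=1$ iff $c\in\partial R$. These differ precisely at reducible crossings, where a single region occupies two opposite quadrants: there $v_R(c)=0$ but $w_R(c)=1$. This does not harm your inter-component argument, since a crossing between distinct components is never reducible and hence $v_R$ and $w_R$ agree on those coordinates. It \emph{does} affect the self-crossing lift: if $c''$ is a reducible self-crossing of $K_i$ in $D_i$ whose two opposite quadrants land in distinct regions of $D$ (because another component separates them), then lifting a set $\mathcal{S}'$ that includes the doubled region can toggle $c''$ an odd number of times in $D$ even though it was untouched in $D_i$. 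The paper's proof of Proposition~\ref{lem-ki} meets exactly this issue and repairs it by an explicit ``modification'' pass over the reducible crossings, processed in a partial order; you should either adopt that step, or restrict to reduced diagrams, or work with $w_R$ from the outset and redo the boundary-traversal lemma accordingly.
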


\vspace{2mm}

\subsection{Prohibited and compulsory regions}

For a knot projection $P$, an {\it ineffective set} is a set $S$ of regions of $P$ such that RCCs at all the regions in $S$ cause no changes. 
For example, the set of all the shaded regions of an irreducible knot projection with a checkerboard coloring is an ineffective set. 
Recently, ineffective sets with two ``prohibited'' regions on a generalized Region Select game were discussed with some settings in \cite{k-col}, \cite{fukushima} and \cite{jong}. 
From their results, we have the following lemmas on RCCs on knot diagrams:

\vspace{2mm}

\begin{lem}[\cite{k-col}, \cite{fukushima}, \cite{jong}, \cite{rims}, etc.]
Let $R_1 , R_2$ be adjacent regions of a knot projection $P$. 
For each crossing $c$ of $P$, there exists a set $S$ of regions such that $S$ does not include $R_1$ nor $R_2$ and changes only $c$ by RCCs. 
\label{prop-p-c}
\end{lem}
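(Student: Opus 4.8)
The plan is to start from an arbitrary solution that isolates $c$ and then repair it, using ineffective sets, so that it avoids $R_1$ and $R_2$. By Theorem~\ref{thm-rcc-cc-k} there is a set $S_0$ of regions of $P$ such that RCCs at all regions of $S_0$ change only $c$. If $R_1\notin S_0$ and $R_2\notin S_0$, we simply take $S=S_0$; the real work is to handle the remaining cases without disturbing the effect on the crossings.

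The key ingredient is that the collection of ineffective sets of $P$ is closed under symmetric difference, since RCC is $\mathbb{F}_2$-linear in the chosen set of regions; hence replacing $S_0$ by $S_0\mathbin{\triangle}K$ for any ineffective set $K$ still changes only $c$. I would use the following ineffective sets: $\emptyset$; the two colour classes $B$ (black) and $W$ (white) of a checkerboard colouring of $P$ (this is the general form of the example recalled above — at every crossing the four incident corners split into two of each colour, so each colour class meets every crossing with even multiplicity, hence causes no change); and therefore also $B\mathbin{\triangle}W$, which, $B$ and $W$ being disjoint, is the set of all regions.

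Finally I would run a short case analysis, exploiting that $R_1$ and $R_2$ are adjacent and so receive opposite colours in the checkerboard colouring; say $R_1\in B$ and $R_2\in W$. If $R_1,R_2\notin S_0$, take $S=S_0$. If $R_1\in S_0$ but $R_2\notin S_0$, take $S=S_0\mathbin{\triangle}B$: this deletes $R_1$ (which lies in both $S_0$ and $B$) while leaving $R_2$ out (it lies in neither). If $R_2\in S_0$ but $R_1\notin S_0$, take $S=S_0\mathbin{\triangle}W$ by the symmetric argument. If $R_1,R_2\in S_0$, take $S=S_0\mathbin{\triangle}(B\mathbin{\triangle}W)$, which deletes every region currently in $S_0$, in particular both $R_1$ and $R_2$. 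In each case $S$ still changes only $c$ by RCCs and contains neither $R_1$ nor $R_2$. The only point requiring a little care is verifying that each colour class is ineffective even when $P$ is reducible (a region may then meet a crossing at two corners), but the even-multiplicity count above is insensitive to this; everything else is bookkeeping.
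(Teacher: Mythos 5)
The paper does not prove this lemma itself---it imports it from \cite{k-col}, \cite{fukushima}, \cite{jong}, \cite{rims}---so your argument has to stand on its own. Your overall strategy (start from a set $S_0$ isolating $c$ via Theorem \ref{thm-rcc-cc-k}, then correct it by symmetric differences with ineffective sets built from a checkerboard colouring) is sound and is in the same spirit as the paper's own proofs of Lemmas \ref{prop-pc-in} and \ref{prop-pc-c}; the $\mathbb{F}_2$-linearity observation and the case analysis using $B$, $W$ and $B\mathbin{\triangle}W$ are all fine \emph{provided the colour classes really are ineffective}.

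That is exactly where the gap is, and it is the one point you flagged and then dismissed incorrectly. A region crossing change at $R$ changes each crossing on $\partial R$ \emph{once}, as a function of boundary membership, not of how many corners of the crossing lie in $R$. At a reducible (nugatory) crossing, two opposite corners---which carry the same colour---can belong to a single region; that region then contributes one change to the crossing, not two, so the ``even corner multiplicity'' count does not translate into ``no net change.'' Concretely, for the one-crossing diagram of the unknot, the outer region by itself is one colour class, and the RCC at it changes the crossing, so that colour class is not ineffective; likewise $B\mathbin{\triangle}W$ (all regions) fails to be ineffective whenever some crossing lies on the boundary of an odd number of regions. This is precisely why the paper's example of an ineffective set is stated only for \emph{irreducible} projections. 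Your proof is therefore complete for reduced projections but not for the lemma as stated. To repair it you would need either to restrict to reduced projections, or to add a further correction step at reducible crossings (in the style of the ``modifications at reducible crossings'' used in the proof of Proposition \ref{lem-ki}), or simply to defer to the cited references as the paper does.
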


\vspace{2mm}

\begin{lem}
Let $R_1 , R_2$ be adjacent regions of a knot projection $P$. 
There exists an ineffective set for $P$ which does not include $R_1$ nor $R_2$. 
\end{lem}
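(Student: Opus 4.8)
The plan is to begin with the ineffective set consisting of \emph{all} regions of $P$, delete $R_1$ and $R_2$ from it, and then repair the resulting ``damage'' one crossing at a time using Lemma~\ref{prop-p-c}. It is convenient to encode a finite family of RCCs by its \emph{effect}, the set of crossings at which the over/under information gets reversed; performing the RCCs for a region set $A$ and then those for a region set $B$ has the same effect as the region set $A\bigtriangleup B$ (symmetric difference), because effects add modulo $2$.

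First I would note that the set $\mathcal{R}$ of all regions of $P$ is ineffective: every crossing has four corners, so RCCs at all regions reverse each crossing an even number of times. (Equivalently, $\mathcal{R}$ is the union of the shaded and the unshaded regions of a checkerboard coloring, each of which is ineffective, as recalled above.) Put $A := \mathcal{R} \setminus \{R_1, R_2\}$. Since $\mathcal{R}$ is ineffective, the effect of $A$ equals the effect of $\{R_1, R_2\}$; denote by $C$ the finite set of crossings reversed by this common effect.

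Next, for each $c \in C$ apply Lemma~\ref{prop-p-c} to the (adjacent) pair $R_1, R_2$ and the crossing $c$ to obtain a region set $S_c$ with $R_1 \notin S_c$, $R_2 \notin S_c$, and effect exactly $\{c\}$. Define
\[
T \;:=\; A \;\bigtriangleup\; \Bigl(\,\bigtriangleup_{c \in C} S_c\,\Bigr).
\]
Then $T$ works: its effect is the effect of $A$, namely $C$, symmetric-differenced with the effect of $\bigtriangleup_{c\in C} S_c$, which is again $C$, so the effect of $T$ is empty and RCCs at the regions of $T$ cause no change. Also $R_1 \notin T$ and $R_2 \notin T$, since neither region lies in $A$ or in any $S_c$, hence each lies in an even (in fact zero) number of the sets whose symmetric difference is $T$.

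The part I expect to need the most care is the bookkeeping in the first paragraph: one must verify that the set of crossings reversed by a composition of RCCs is controlled by $\mathbb{F}_2$-addition of the incidence vectors of the region sets involved, so that ``$\bigtriangleup$'' of region sets really corresponds to composing the RCCs and the two copies of $C$ genuinely cancel. (If $C = \emptyset$ the construction degenerates to $T = A$, which is already ineffective; in any case $T$ is an ineffective set avoiding $R_1$ and $R_2$, which is all that is asserted.)
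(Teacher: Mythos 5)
The paper states this lemma without proof, so there is no in-paper argument to compare against; what you propose is, however, precisely the symmetric-difference technique the paper deploys one lemma later (in the proof of Lemma~\ref{prop-pc-in}): start from a seed set of regions avoiding the two prohibited regions, compute its effect, and cancel that effect crossing by crossing using the single-crossing sets $S_c$ supplied by Lemma~\ref{prop-p-c}. The $\mathbb{F}_2$-bookkeeping you flag as the delicate point is fine: the effect of a family of regions is the mod-$2$ sum of the effects of its members (this is the incidence-matrix formulation the paper cites from Ahara--Suzuki and Cheng--Gao), so symmetric difference of region sets does correspond to composition of the RCCs, and your verifications that $R_1,R_2\notin T$ and that the effect of $T$ vanishes are correct.

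One step needs repair. The claim that the set $\mathcal{R}$ of all regions is ineffective is false for reducible projections: at a nugatory crossing only three distinct regions meet (two opposite corners lie in the same region), so under the paper's convention --- a RCC at $R$ changes each crossing on $\partial R$ once, regardless of how many corners of $R$ abut it --- the all-regions set changes that crossing three times. This is exactly why the paper's checkerboard example is stated only for \emph{irreducible} projections, and your parenthetical ``equivalently'' inherits the same restriction. The gap is harmless, because your construction never actually needs $\mathcal{R}$ to be ineffective: just define $C$ to be the effect of $A=\mathcal{R}\setminus\{R_1,R_2\}$, whatever that effect is, and cancel it with the $S_c$ as you do. A slightly cleaner variant is to seed with a single third region $R_3\notin\{R_1,R_2\}$ and cancel the crossings on $\partial R_3$, i.e.\ take $R_3\oplus S_1\oplus\dots\oplus S_n$ exactly as in the paper's proof of Lemma~\ref{prop-pc-in}; this also guarantees the resulting ineffective set is nonempty (as literally stated the lemma is satisfied by the empty set, and your $T$ could in principle degenerate to it).
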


\vspace{2mm}

\noindent For ``prohibited'' and ``compulsory'' regions, we have the following: 

\vspace{2mm}

\begin{lem}
Let $R_1$, $R_2$ be adjacent regions of a knot projection $P$. 
There exists an ineffective set of $P$ which includes $R_1$ and does not include $R_2$. 
\label{prop-pc-in}
\end{lem}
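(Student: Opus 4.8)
The plan is to build the desired ineffective set by combining two tools already established. First, recall the classical fact mentioned just above: for an irreducible knot projection $P$, the set of all regions receiving one color in a checkerboard coloring is an ineffective set, since each crossing lies on exactly two same-colored regions. Call these two color classes $B$ (black) and $W$ (white); both $B$ and $W$ are ineffective sets, and $B \cup W$ is the set of all regions. The key observation is that, modulo the subgroup of ineffective sets, I may adjust any candidate set freely by toggling in or out all regions of $B$ or of $W$ without changing its effect.

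Now let $R_1, R_2$ be the given adjacent regions. Because they are adjacent, they share an edge of $P$, so in any checkerboard coloring they receive opposite colors; say $R_1 \in B$ and $R_2 \in W$. Then take $S = B$. This $S$ is an ineffective set, it contains $R_1$, and it does not contain $R_2$, which is exactly what is claimed. If $P$ happens to be reducible (has a nugatory crossing, so no genuine checkerboard coloring into an ineffective set exists in the naive sense), I would first reduce to the irreducible case: reducible crossings can be handled separately, or one notes that the lemma statement is about the projection $P$ and the standard results invoked (Lemma \ref{prop-p-c} and the preceding lemma) are stated for general knot projections, so I would instead derive the present lemma from the preceding one. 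Concretely, the previous lemma gives an ineffective set $S_0$ avoiding both $R_1$ and $R_2$; if I can also produce an ineffective set $S_1$ containing $R_1$ but not $R_2$ by a local argument near the shared edge, I am done, and the symmetric difference $S_0 \triangle S_1$ is again ineffective.

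The cleanest uniform route avoiding case analysis is: take the symmetric difference of the full region set with a checkerboard class, or equivalently observe directly that the color class containing $R_1$ is ineffective and, by adjacency, excludes $R_2$. The only subtlety — and the step I expect to be the main obstacle — is justifying that a checkerboard color class is genuinely ineffective when $P$ is not assumed reduced, i.e. handling nugatory crossings, where a region can be adjacent to itself across a loop. I would dispatch this by the standard observation that at a nugatory crossing the two local regions on one side of the checkerboard coloring coincide, so that crossing is flipped twice (equivalently zero times) when RCCs are performed at the whole color class; hence ineffectiveness survives. With that in hand, choosing $S$ to be the color class of $R_1$ finishes the proof, and adjacency of $R_1$ and $R_2$ guarantees $R_2 \notin S$.
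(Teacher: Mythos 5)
There is a genuine gap, and it is exactly at the point you flagged as the ``main obstacle.'' Your core idea --- take the checkerboard color class containing $R_1$; adjacency forces $R_2$ into the other class --- works when $P$ is irreducible, but the lemma is stated for arbitrary knot projections, and your patch for nugatory crossings rests on a miscount. A region crossing change at a region $R$ flips each crossing on $\partial R$ \emph{once}, not once per corner of incidence. At a nugatory crossing the region meeting it at two opposite (hence same-colored) corners is a single region, so selecting its whole color class flips that crossing an odd number of times (once from the doubled region, and there is no second region of that color at the crossing to cancel it); it is the \emph{other} color class, where the two same-colored corners are distinct regions, that flips the crossing twice. So exactly one color class fails to be ineffective at each nugatory crossing, and it may well be the class containing $R_1$ --- e.g.\ for the figure-eight projection $\infty$ on $S^2$ with $R_1$ the outer region, your set $S=B=\{R_1\}$ changes the crossing. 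Your fallback (``produce an ineffective set $S_1$ containing $R_1$ but not $R_2$ by a local argument'') is circular, since that is precisely the statement to be proved, and reducibility cannot be dismissed here: the paper leans on reducible crossings repeatedly, so the lemma must hold without an irreducibility hypothesis.

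The paper's proof avoids the coloring entirely. It starts from the singleton $\{R_1\}$, whose only effect is to change the crossings $c_1,\dots,c_n$ on $\partial R_1$, and then cancels each $c_i$ by taking the symmetric difference with a set $S_i$ (from Lemma~\ref{prop-p-c}) that changes only $c_i$ and avoids both $R_1$ and $R_2$. The result $R_1\oplus S_1\oplus\cdots\oplus S_n$ is ineffective, contains $R_1$ (no $S_i$ can remove it), and omits $R_2$. If you want to salvage your approach, you would need to correct the bad color class at each nugatory crossing by a similar symmetric-difference argument --- at which point you are essentially redoing the paper's proof, so the coloring buys nothing in the general case.
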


\begin{proof}
Let $c_1 , c_2 , \dots ,c_n$ be the crossings on $\partial R_1$. 
Let $S_1 , S_2 , \dots ,S_n$ be sets of regions such that $S_i$ does not include $R_1$ nor $R_2$ and changes $c_i$ by RCCs (Lemma \ref{prop-p-c}). 
Take the symmetric difference $S= R_1 \oplus S_1 \oplus S_2 \oplus \dots \oplus S_n$, where $T_1 \oplus T_2 = (T_1 \cup T_2 ) \setminus (T_1 \cap T_2 )$ and $T_1 \oplus T_2 \oplus T_3 = (T_1 \oplus T_2) \oplus T_3$. 
Then, $S$, which is an ineffective set, includes $R_1$ and does not include $R_2$. 
\end{proof}

\vspace{2mm}

\begin{lem}
Let $R_1$, $R_2$ be adjacent regions of a knot projection $P$. 
For each crossing $c$ of $P$, there exists a set of regions $S$ which includes $R_1$, does not include $R_2$, and changes only $c$ by RCCs.
\label{prop-pc-c}
\end{lem}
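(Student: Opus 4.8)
The plan is to combine the two preceding lemmas by taking a symmetric difference, exactly as in the proof of Lemma \ref{prop-pc-in} but with one of the ingredients replaced. First I would invoke Lemma \ref{prop-p-c} to get a set of regions $S_c$ which changes only the crossing $c$ by RCCs and contains neither $R_1$ nor $R_2$. Next I would invoke Lemma \ref{prop-pc-in} to get an ineffective set $I$ which contains $R_1$ and does not contain $R_2$. Then I would set $S = S_c \oplus I$, using the same symmetric-difference conventions fixed in the proof of Lemma \ref{prop-pc-in}.

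It remains to verify the three required properties. For the effect on crossings: performing RCCs at every region of $S = S_c \oplus I$ has the same total effect as performing RCCs at every region of $S_c$ and then at every region of $I$, since a region lying in $S_c \cap I$ gets changed twice and hence contributes nothing; because $I$ is ineffective, this composite effect equals that of $S_c$ alone, namely changing exactly $c$. For membership of $R_1$: since $R_1 \in I$ and $R_1 \notin S_c$, we have $R_1 \in S_c \oplus I = S$. For non-membership of $R_2$: since $R_2 \notin S_c$ and $R_2 \notin I$, we have $R_2 \notin S$. This completes the argument.

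I do not expect a genuine obstacle here: the statement is essentially the composition of Lemma \ref{prop-p-c} (which supplies the crossing-change requirement while avoiding both regions) with Lemma \ref{prop-pc-in} (which supplies the ``compulsory $R_1$, prohibited $R_2$'' requirement at no cost to effectiveness, since ineffective sets are precisely what $\oplus$-ing is free to add). The only point needing a line of care is the routine observation that symmetric difference with an ineffective set does not change which crossings are switched, together with the fact that $\oplus$ behaves like exclusive-or on membership, which immediately yields the $R_1 \in S$, $R_2 \notin S$ bookkeeping. If one wished to avoid citing Lemma \ref{prop-pc-in}, one could instead reconstruct it inline, taking $R_1 \oplus S_1 \oplus \cdots \oplus S_n$ over the crossings $c_1,\dots,c_n$ on $\partial R_1$ (with each $S_i$ from Lemma \ref{prop-p-c}) and then forming the symmetric difference with $S_c$; but invoking the lemma directly is cleaner.
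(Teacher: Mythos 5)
Your proof is correct and follows essentially the same strategy as the paper: take a set that changes only $c$ and adjust its membership by symmetric difference with an ineffective set, using the fact that XOR-ing with an ineffective set leaves the crossing-change effect untouched. The only (harmless) difference is that by starting from the set of Lemma \ref{prop-p-c}, which already avoids both $R_1$ and $R_2$, you need just one corrective ineffective set, whereas the paper starts from an arbitrary only-$c$-changing set and keeps a second ineffective set (containing $R_2$ but not $R_1$) on hand in case $R_2$ must be removed.
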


\begin{proof}
Let $S^1$ be an ineffective set such that $R_1 \in S^1$ and $R_2 \not\in S^2$ (Lemma \ref{prop-pc-in}). 
Let $S^2$ be an ineffective set such that $R_1 \not\in S^2$ and $R_2 \in S^2$. 
Let $T$ be a set of regions of $P$ which changes only $c$ by RCCs. 
If $T$ does not include $R_1$, take the symmetric difference $T \oplus S^1$ with $S^1$, and/or if $T$ includes $R_2$, take the symmetric difference with $S^2$. 
\end{proof}

\section{Region Select on origami}
\label{section-ors}

In this section, Region Select on origami is introduced and investigated. 

\subsection{Crease pattern}

Let $S$ be a square sheet of paper. 
After a sequence of folding, $S$ has a ``crease pattern'', a union of creases on $S$. 
By considering each intersection of creases as a vertex and each crease segment as an edge, we can describe crease patterns in terms of graph theory. 
We note that there are some edges which have no vertices on $\partial S$. 
We define a face to be a connected portion of $S$ divided by the creases.

If it is possible to use all the creases to create a flat origami object, we say the crease pattern is a {\it flat foldable pattern}. 
The following necessary conditions are well-known in the study of origamis. (See, for example, \cite{hull}.)

\begin{prop}
Let $P$ be a flat foldable pattern. Then, the following holds. \\
1. Each vertex in $\mathrm{int}(S)$ has an even degree. \\
2. Around each vertex in $\mathrm{int}(S)$, the sum of degrees of every other angles is $180^{\circ}$ .
\end{prop}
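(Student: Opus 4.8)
The two statements are the classical local necessary conditions for flat foldability around an interior vertex, so the plan is to argue each separately by a local analysis at a single vertex $v \in \mathrm{int}(S)$. For part 1, the idea is to look at a small disk $D_\varepsilon$ around $v$ and consider how $\partial D_\varepsilon$, a circle, is mapped under the flat folding. The creases emanating from $v$ cut $D_\varepsilon$ into sectors, and the folding sends each sector isometrically into the plane, reversing orientation at each crease. Following the image of $\partial D_\varepsilon$ once around, each time we cross a crease the ``direction of travel'' along the image circle reverses; for the image to close up consistently (the folded paper must be an immersed disk with the boundary circle returning to its start with the correct orientation after one loop), the number of such reversals must be even. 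Hence the number of creases at $v$, i.e.\ $\deg(v)$, is even.

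For part 2, I would set up angles explicitly: label the consecutive sector angles around $v$ as $\theta_1, \theta_2, \dots, \theta_{2n}$ in cyclic order, with $\sum_{i=1}^{2n}\theta_i = 360^\circ$. Track the position of a point moving along $\partial D_\varepsilon$ by its angular coordinate in the folded image. Starting in sector $1$ and moving in the positive direction, after passing through sector $1$ the image angle advances by $\theta_1$; crossing the next crease flips orientation, so traversing sector $2$ the image angle advances by $-\theta_2$; and so on, alternating sign. For the folded image to be consistent (the boundary point returns to itself after going once around $v$), the total signed angular displacement must be $0^\circ$ modulo $360^\circ$; combined with the constraint that the folding is locally an immersion one gets exactly
\[
\theta_1 - \theta_2 + \theta_3 - \cdots - \theta_{2n} = 0,
\]
which rearranges to $\theta_1 + \theta_3 + \cdots + \theta_{2n-1} = \theta_2 + \theta_4 + \cdots + \theta_{2n} = 180^\circ$. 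That is the claim that alternate angles sum to $180^\circ$.

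The step I expect to require the most care is making precise the ``consistency'' or ``immersion'' requirement that forces the even count and the zero signed sum — i.e.\ ruling out that the layers could wrap around $v$ more than once or fold back in a way that evades the parity/angle constraints. The clean way to handle this is to restrict attention to the boundary circle $\partial D_\varepsilon$ and observe that its image under the flat folding is a closed path in the plane which, being a composition of isometries on each arc, has the property that consecutive arcs meet at a crease image with a reflection; since the original circle bounds a disk mapped into the plane by a piecewise isometry that is a local homeomorphism away from the creases, tracking a single boundary point and demanding it return to its starting position and direction after one circuit yields both conditions at once. Since the Proposition only asserts these as \emph{necessary} conditions (and even cites \cite{hull} for them), I would keep this argument at the level of this local circle-tracking sketch rather than developing the full combinatorics of layer orderings.
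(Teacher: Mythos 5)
The paper offers no proof of this Proposition at all: it is stated as a pair of well-known necessary conditions with a citation to \cite{hull}, so there is nothing internal to compare your argument against. That said, your sketch is the standard circle-tracking proof of these local flat-foldability conditions, and it is essentially correct. For part 1, the key point can be phrased crisply: if $g_i$ denotes the isometry by which the $i$-th sector is placed in the plane, then $g_{i+1}=g_i\circ r_i$ with $r_i$ a reflection, and going once around the vertex forces the composition of all $\deg(v)$ reflections to be the identity on the starting sector; an odd number of reflections is orientation-reversing, so $\deg(v)$ is even. For part 2, the one step you leave slightly loose is the passage from ``$\theta_1-\theta_2+\cdots-\theta_{2n}\equiv 0 \pmod{360^\circ}$'' to ``$=0$''. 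This does not actually require any immersion or layer-ordering argument: writing $O=\theta_1+\theta_3+\cdots$ and $E=\theta_2+\theta_4+\cdots$ with $O+E=360^\circ$ and all $\theta_i>0$, the alternating sum equals $2O-360^\circ$, which lies strictly between $-360^\circ$ and $360^\circ$, so the congruence forces it to vanish and hence $O=E=180^\circ$. With that one-line refinement your argument is complete and is exactly the proof one finds in the origami literature the paper cites.
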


\vspace{2mm}
\noindent We note that the above two conditions are not sufficient conditions.

\subsection{Region Select on crease pattern}

Let $P$ be a flat foldable crease pattern on a square sheet of paper $S$. 
Give a lamp which has the status of ON or OFF to each vertex. 
Then we can play the Region Select on $P$, and also we can discuss the changeability by RCCs for crossings of creases in the same manner to knot projections. 
For example, on the crease pattern in Figure \ref{fig-deg6} with crossings labeled $v_1, v_2, \dots , v_7$ and regions labeled $R_1, R_2, \dots , R_{12}$, we can see that the crossing $v_2$ is changeable by RCCs at $\{ R_9, R_{12} \}$, $\{ R_2, R_5, R_6, R_7, R_9, R_{11} \}$, etc. We can see that the crossing $v_1$ is unchangeable by RCCs by solving the following linear system (see \cite{ahara-suzuki}, \cite{cheng-gao} for the linear systems): 

\begin{align*}
\left\{
\begin{array}{l}
v_1 : x_1 + x_5 + x_7 + x_8 \equiv 1 \pmod 2 \\
v_2 : x_4 + x_5 + x_6 + x_9 + x_{10} \equiv 0 \pmod 2 \\
v_3 : x_3 + x_6 + x_{10} + x_{11} \equiv 0 \pmod 2 \\
v_4 : x_1 + x_2 + x_4 + x_5 \equiv 0 \pmod 2 \\
v_5 : x_2 + x_3 + x_4 + x_6 \equiv 0 \pmod 2 \\
v_6 : x_7 + x_8 + x_9 + x_{12} \equiv 0 \pmod 2 \\
v_7 : x_9 + x_{10} + x_{11} + x_{12} \equiv 0 \pmod 2 
\end{array}
\right.
\end{align*}

\begin{figure}[h]
\centering
\includegraphics[width=25mm]{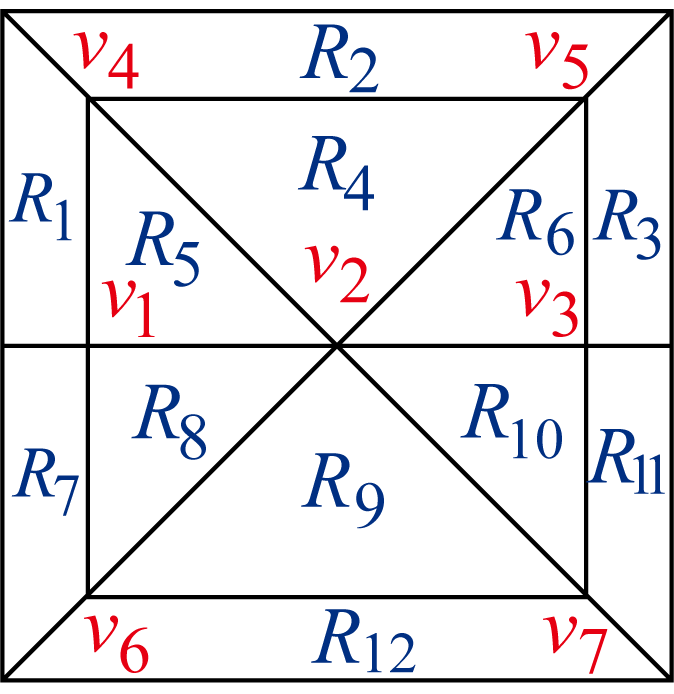}
\caption{The crossing $v_1$ is unchangeable and $v_2$ is changeable.}
\label{fig-deg6}
\end{figure}

\noindent We can confirm that the linear system has no solutions by summing the 1, 3, 4, 5, 6, 7th equations. 
In the next two subsections, we will focus on the ``4-regular'' cases with two types of the situations on the boundary of $S$.

\subsection{Region Select on 4-regular crease pattern, type I}

Let $P$ be a flat foldable pattern on a square sheet of paper $S$ such that every vertex has degree four and there are no intersections of creases on $\partial S$. 
Here, since $P$ is flat foldable, each angle around a vertex is less than $180^{\circ}$ and then each face is a convex shape.

A {\it tanglize} is the following transformation of a 4-regular crease pattern into a ``tangle''; 
Take a start point $p$ on an edge of $P$. 
Travel the edge from $p$ in both directions. 
When meeting a vertex, proceed to the edge on the opposite side. 
Keep traveling until meeting $\partial S$ or the path is closed on an edge (not on a vertex). 
By the above procedure, we obtain a (closed or open) curve uniquely. 
Repeat the procedure until every crease is done. 
Then we obtain a tangle $T$ with components $K_1, K_2, \dots , K_l, L_1, L_2, \dots ,L_m$, where a component $K_i$ implies a closed curve and $L_i$ implies an open curve (see Figure \ref{fig-tanglize}). 
We have the following proposition. 

\begin{figure}[h]
\centering
\includegraphics[width=50mm]{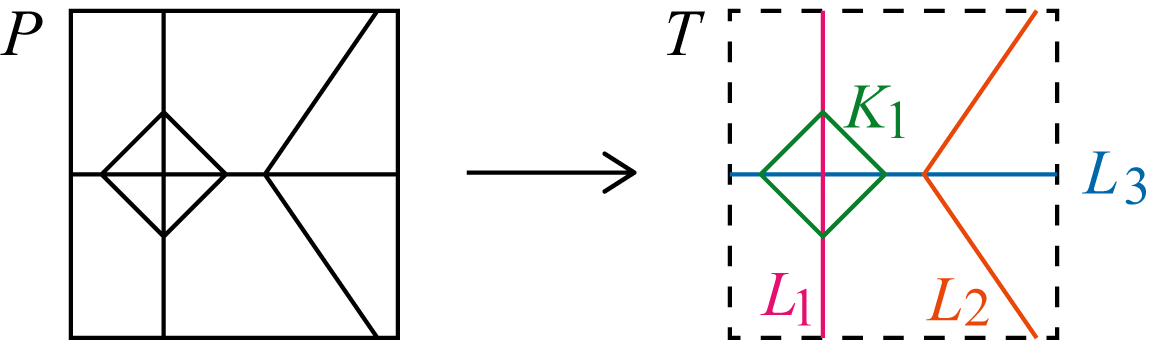}
\caption{Tanglize.}
\label{fig-tanglize}
\end{figure}

\begin{prop}
Each self-crossing $c$ of $K_i$ is changeable by RCCs\footnote{At the moment, the authors have no examples of a flat foldable crease pattern such that $K_i$ has a self-crossing.}. 
\label{lem-ki}
\end{prop}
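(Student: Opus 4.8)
The plan is to reduce the statement to the known result for knot projections (Theorem~\ref{thm-rcc-cc-k}) by exploiting the local structure near a self-crossing of a tanglize component $K_i$. Recall that RCC is a purely local operation on the diagram: whether a set $S$ of regions changes a given crossing $c$ depends only on the parity of the number of regions in $S$ that are incident to $c$. So the whole question is combinatorial, governed by the incidence pattern between faces and vertices of $P$, exactly as in the link/knot case.

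First I would set up the following observation: a self-crossing $c$ of $K_i$ is a vertex of $P$ at which the two strands of $T$ passing through are \emph{the same component} $K_i$. Because $K_i$ is a closed curve obtained by the tanglize "go straight through each vertex" rule, the sub-diagram consisting of $K_i$ alone (forgetting all the other components $K_j$ and $L_j$) is itself a closed 4-valent curve on $S^2$ (viewing $S$ inside $S^2$), i.e.\ a knot or a multi-component diagram whose only crossings are the self-crossings of $K_i$ plus the crossings of $K_i$ with other components. The cleanest route is: take the knot projection $P'$ obtained from $P$ by smoothing every crossing that is \emph{not} a self-crossing of $K_i$ according to the tanglize rule (i.e.\ replace the transverse crossing by the two non-crossing straight strands); what remains, keeping only $K_i$, is a knot projection on $S^2$. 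By Theorem~\ref{thm-rcc-cc-k} (applied on $S^2$; $P'$ lives on the disk $S$, but one can regard the outer region as a single region of $S^2$, and the changeability argument of \cite{rcc-uo} is unaffected), there is a set $S'$ of regions of $P'$ that changes only $c$.

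The key step is then to lift $S'$ back to a set of regions of the original pattern $P$. Each region of $P'$ is a union of regions of $P$ (smoothing crossings merges faces); conversely each face of $P$ lies in exactly one face of $P'$. Define $S$ to be the union of all faces of $P$ lying inside some face in $S'$. I claim $S$ changes exactly the crossing $c$ in $P$. At $c$ itself, the four faces of $P$ around $c$ are exactly the four faces of $P'$ around $c$ (smoothing elsewhere does not affect the immediate corners at $c$ since $c$ is retained), so $c$ is flipped iff it was flipped by $S'$ in $P'$, i.e.\ yes. At any other self-crossing of $K_i$, the same argument gives "not flipped". The remaining thing to verify is that $S$ flips none of the smoothed crossings, i.e.\ the crossings of $K_i$ with other components and the self-crossings of the other components: here one uses that $S$ is a union of \emph{whole} faces of $P'$, and that at a smoothed crossing $d$ the two $P$-faces meeting $d$ from the two sides of the straight strand of $P'$ through $d$ belong to the \emph{same} $P'$-face — hence are both in $S$ or both out of $S$ — so the parity of $S$-faces at $d$ is even and $d$ is unchanged. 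This local "two opposite corners merge" picture at a smoothed vertex is precisely what makes the reduction work.

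The main obstacle I expect is the bookkeeping in that last verification: one must check carefully that at every smoothed vertex $d$ the correspondence between the four corners of $d$ in $P$ and their images in $P'$ really identifies them two-by-two (the two corners on the same side of a straight strand going into one $P'$-face), and that no degenerate situation — such as a region of $P'$ being incident to $d$ from both of the merged pairs, which could in principle happen on a sphere with few faces — spoils the parity count. A safe way around potential degeneracies is to phrase everything in the $\mathbb{F}_2$ vector space language of the linear systems referenced in the text (\cite{ahara-suzuki}, \cite{cheng-gao}): smoothing a crossing corresponds to deleting one equation and identifying two variables, and one checks that the $P'$-solution vector, pulled back along this identification and extended by the rule "all $P$-faces inside one $P'$-face get the same value", satisfies the $P$-system with right-hand side $e_c$. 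Once that algebraic reformulation is in place, the verification becomes a short and routine check rather than a case analysis of pictures.
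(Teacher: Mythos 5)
The reduction you propose (delete every strand other than $K_i$, solve on the resulting knot projection $P'$ by Theorem \ref{thm-rcc-cc-k}, and pull the solution back by taking all faces of $P$ contained in the chosen faces of $P'$) is a genuinely different route from the paper's, but the step you flag as ``bookkeeping'' is exactly where it breaks, and it is not routine. A region crossing change at a region $R$ changes a crossing on $\partial R$ \emph{once}, even if $R$ meets that crossing at two opposite corners, so the relevant incidence count is the number of distinct incident regions, not the number of corners; your parity argument at a smoothed crossing counts corners. Concretely, if $d$ is a reducible crossing of $P$ between $K_i$ and another component (two opposite corners of $d$ lie in a single region $R$ of $P$), then the two $P'$-faces on the two sides of the surviving strand coincide, the four corners of $d$ are covered by only three distinct $P$-regions, all lying in that one $P'$-face; if that face is in $S'$, your lift puts all three into $S$ and flips $d$. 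Dually, at a self-crossing $d\neq c$ of $K_i$ that is reducible in $P'$ but not in $P$, one $P'$-face of $S'$ incident to $d$ pulls back to two distinct $P$-faces, again changing the parity. So the lifted set changes $c$ together with an uncontrolled collection of reducible crossings, and the $\mathbb{F}_2$ reformulation does not dissolve this, because identifying variables under smoothing does not commute with the ``one region counts once'' convention in the incidence matrix.

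This is precisely the difficulty the paper's proof is organized around. It builds the region set from a checkerboard shading of the two-component diagram obtained by splicing $K_i$ at $c$ (so that every \emph{irreducible} crossing other than $c$ automatically meets an even number of shaded regions), and then runs an explicit correction at the reducible crossings: they are processed in an order compatible with the partial order $\preceq$ defined by nested reducing circles, and at each offending crossing one takes the symmetric difference with the shaded region set of a further spliced sub-diagram. To complete your argument you would need to import essentially this same correction procedure (or restrict to patterns with no reducible crossings), at which point your reduction to Theorem \ref{thm-rcc-cc-k} no longer saves any work over the paper's direct construction.
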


\vspace{2mm}

\noindent Before the proof, we discuss the reducible crossings of a tangle. 
A {\it reducible crossing} of a tangle $T$ is the crossing such that we can put a simple closed curve inside the tangle so that the simple closed curve intersects only $c$ transversely. 
We call such a simple closed curve a {\it reducing simple closed curve for $c$}. 
Let $c_1, c_2, \dots , c_l$ be all the reducible crossings in a tangle $T$. 
We give a partial order $c_{\alpha} \preceq c_{\beta}$ in the following rule. 
Let $c_{\alpha}, c_{\beta}$ be reducible crossings of the same component or intersecting two components. 
If one can draw reducing simple closed curves $A$ and $B$ for $c_{\alpha}$ and $c_{\beta}$, respectively, so that $A$ is outside of $B$, we define $c_{\alpha} \preceq c_{\beta}$. 
If two components belong to split components, their partial order is not defined.
For example, in Figure \ref{fig-red-c}, $c_2 \preceq c_1$, $c_2 \preceq c_3$ and $c_4 \preceq c_5$. 
We remark that the partial order between $c_2$ and $c_7$ is undefined since they are on split components. 
Using a method in \cite{rcc-uo}, we prove Proposition \ref{lem-ki}. \\

\begin{figure}[h]
\centering
\includegraphics[width=23mm]{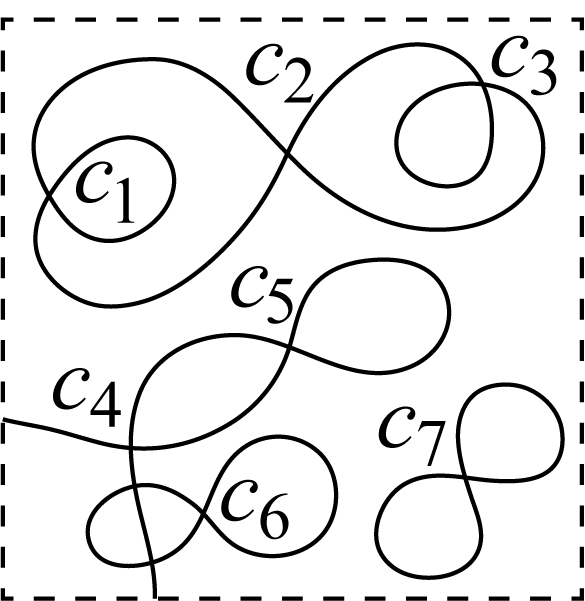}
\caption{Reducible crossings.}
\label{fig-red-c}
\end{figure}

\vspace{2mm}

\noindent {\it Proof of Proposition \ref{lem-ki}.}
Looking at only $K_i$, take the following procedure. 
Splice $K_i$ at $c$ to obtain a two-component link. 
Give a checkerboard coloring to one of the two components. 
Take a set $S$ of regions of $T$ which correspond to the shaded regions. 
Then, there are even number of regions of $S$ around irreducible self-crossings of $K_i$ except $c$, irreducible self-crossings of other components, and any crossings between different components. 
Namely, any irreducible crossing except for $c$ is unchanged by the RCCs at $S$. 
On the other hand, for reducible crossings, some of them may have odd number of regions which belong to $S$ (see $c_1, c_2$ and $c_3$ in Figure \ref{fig-ki-alg}). \\
For reducible crossings, apply the following modification to obtain a set of regions which changes only $c$ by RCCs. 
Let $c_1, c_2, \dots , c_l$ be all the reducible crossings of $T$. 
We note that the crossing $c$ is a member of them when $c$ is a reducible crossing. 
Reorder the reducible crossings if necessary so that they do not contradict with the partial order, and apply the following procedure from $c_1$ to $c_l$. 
If $c_i$ has an undesired status, splice at $c_i$, choose a knot component, and apply a checkerboard coloring such that the outer region is unshaded (see Figure \ref{fig-ki-mod}). 
Take the set $S_i$ of regions of $T$ which correspond to the shaded regions, and take the symmetric difference of $S_i$ and the previous set of regions. 
Thus, we obtain the set of regions which changes only $c$ by RCCs. 
\qed

\begin{figure}[h]
\centering
\includegraphics[width=92mm]{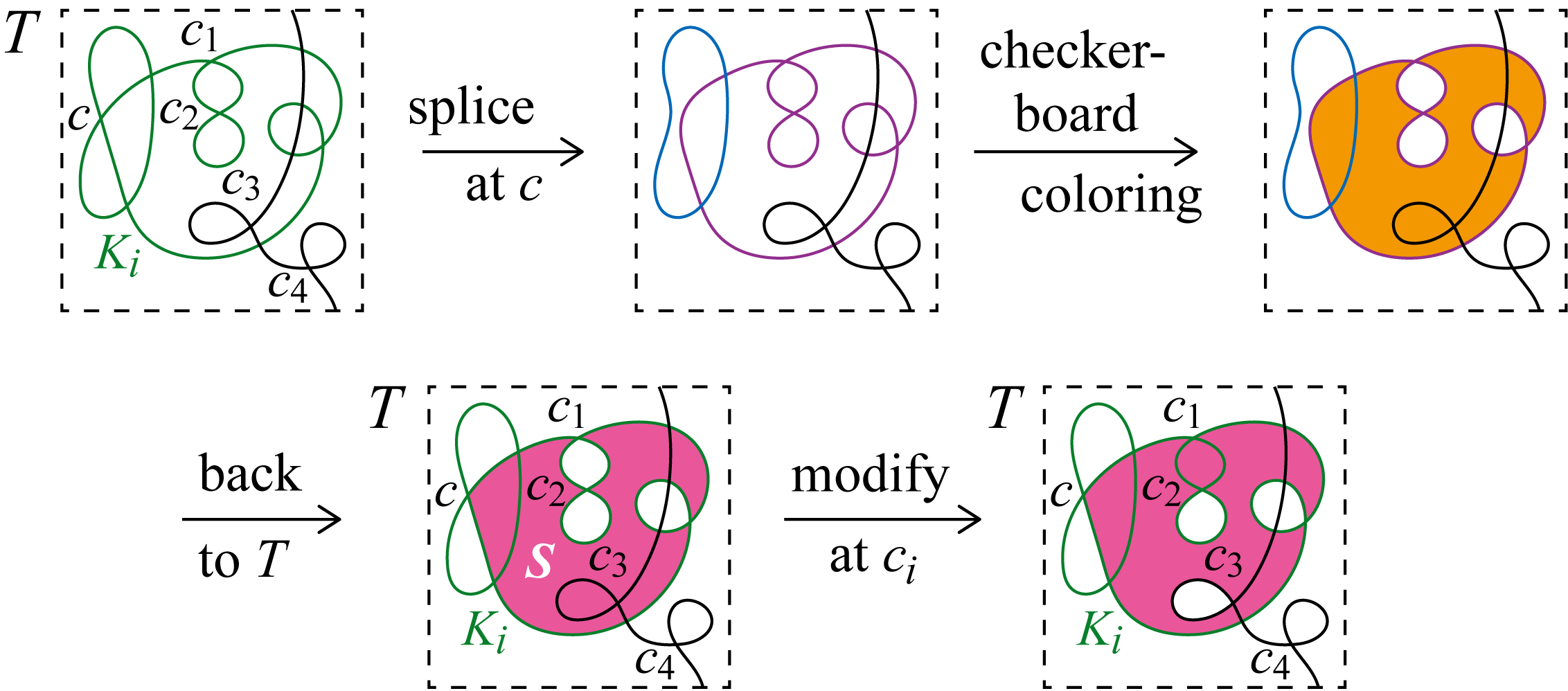}
\caption{The procedure for the proof of Proposition \ref{lem-ki}.}
\label{fig-ki-alg}
\end{figure}

\begin{figure}[h]
\centering
\includegraphics[width=18mm]{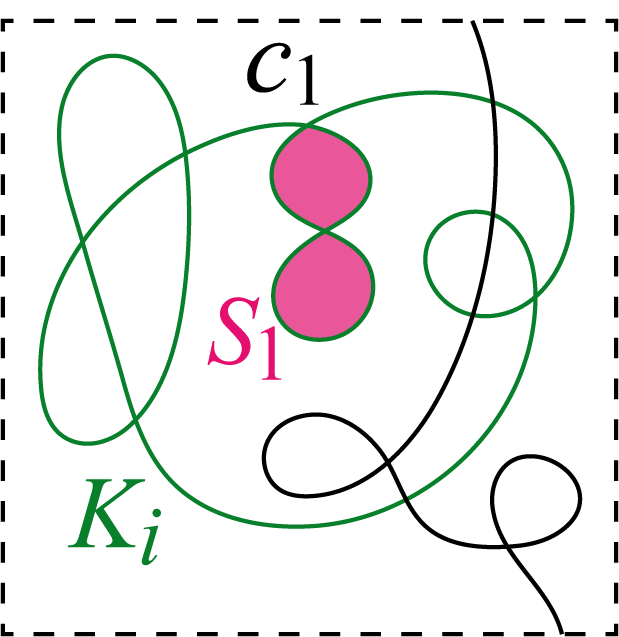}
\caption{$c_1$ and $S_1$.}
\label{fig-ki-mod}
\end{figure}

\noindent We also have the following propositions. 

\vspace{2mm}

\begin{prop}
Each self-crossing of $L_i$ is changeable by RCCs. 
\label{lem-li}
\end{prop}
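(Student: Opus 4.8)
The plan is to mimic the proof of Proposition \ref{lem-ki}, exploiting the fact that an open component $L_i$ has two endpoints on $\partial S$, which actually makes the argument cleaner than the closed case. The key observation is that splicing $L_i$ at a self-crossing $c$ produces \emph{either} two arcs with a total of four endpoints on $\partial S$, \emph{or} one arc plus one closed loop, depending on the nesting of $c$ along $L_i$. First I would fix a self-crossing $c$ of $L_i$ and splice $L_i$ at $c$ in the orientation-respecting way, so that each of the resulting pieces is again a properly embedded arc or loop in the tangle. In the generic subcase we get two sub-arcs $L_i', L_i''$; I choose one of them, say $L_i'$, and give it a checkerboard coloring of the complementary regions of $S$, i.e.\ shade the regions lying to one side of $L_i'$. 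Taking $S$ to be the set of regions of $T$ corresponding to the shaded regions, the same parity count as before shows that every irreducible crossing except $c$ has an even number of incident shaded regions, so only $c$ is flipped by the RCCs at $S$.

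The next step handles the reducible crossings, which is the only place any work is needed. Exactly as in the proof of Proposition \ref{lem-ki}, I would list all reducible crossings $c_1, \dots, c_l$ of $T$, order them consistently with the partial order $\preceq$ (refining it to a linear order), and sweep from $c_1$ to $c_l$: whenever a crossing $c_j$ currently has the wrong parity, splice $T$ at $c_j$, pick the component of the splice whose reducing simple closed curve bounds the side \emph{not} containing the outer boundary of $S$ (so that the outer region ends up unshaded), shade that side's regions to get $S_j$, and replace the running set $S$ by $S \oplus S_j$. Because the partial order guarantees that correcting $c_j$ only perturbs the parities of crossings $c_k$ with $c_k \succeq c_j$ that are processed later, this terminates with a set of regions changing exactly $c$. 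This is essentially verbatim the mechanism already used, so I would simply cite that portion of the previous proof.

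The remaining subcase is when splicing $L_i$ at $c$ yields one arc $L_i'$ together with a closed loop $K'$. Here I would just shade along $L_i'$ alone (ignoring $K'$ for the coloring); the parity bookkeeping is unaffected because $K'$ is a closed curve and contributes an even number of shaded regions around each of its own crossings, around crossings it meets transversely with other components, and so on — exactly the situation the parity argument is designed to cover. Alternatively, one can observe that a closed loop appearing in a splice of $L_i$ can be treated like a $K_i$, and the set produced by the $K_i$-argument composed with the $L_i'$-coloring still isolates $c$. Either way the reducible-crossing correction step above is then applied unchanged.

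The main obstacle I anticipate is purely expository rather than mathematical: making precise the claim that shading ``one side of'' a properly embedded arc in the square $S$ gives a well-defined 2-coloring of the faces with the desired local parity at each vertex, since an arc with endpoints on $\partial S$ separates $S$ into two pieces just as a closed curve on $S^2$ does, so the checkerboard-coloring lemma underlying the proof of Proposition \ref{lem-ki} transfers directly. Once that is noted, the proof is a routine adaptation, and I would keep it short by explicitly saying ``the proof is identical to that of Proposition \ref{lem-ki}, with a closed component replaced by an arc whose endpoints lie on $\partial S$.''
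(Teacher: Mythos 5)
Your proof is essentially the paper's: the paper handles the open component simply by connecting the two endpoints of $L_i$ outside $S$ to form a knot projection and then repeating the argument of Proposition \ref{lem-ki} (splice at $c$, checkerboard-color one of the resulting components, correct the reducible crossings), and this closure is also the cleanest way to settle the well-definedness issue you raise at the end, since the spliced pieces are immersed rather than embedded arcs, so ``one side of the arc'' should be read as the checkerboard coloring of the closed-up curve. One correction: your ``generic subcase'' of two sub-arcs cannot occur --- splicing preserves the number of endpoints, so the oriented splice of an open arc at a self-crossing always yields exactly one arc together with one closed loop; since you also treat that case correctly, the argument is unaffected.
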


\begin{proof}
Since $L_i$ is an open path, we obtain a knot projection by connecting the endpoints outside $S$ (see Figure \ref{fig-1-tangle-cc}).
In the same way to the proof of Proposition \ref{lem-ki}, splice at $c$, give a checkerboard coloring to one of the two components, and apply modifications at some reducible crossings. 
\end{proof}

\begin{figure}[h]
\centering
\includegraphics[width=70mm]{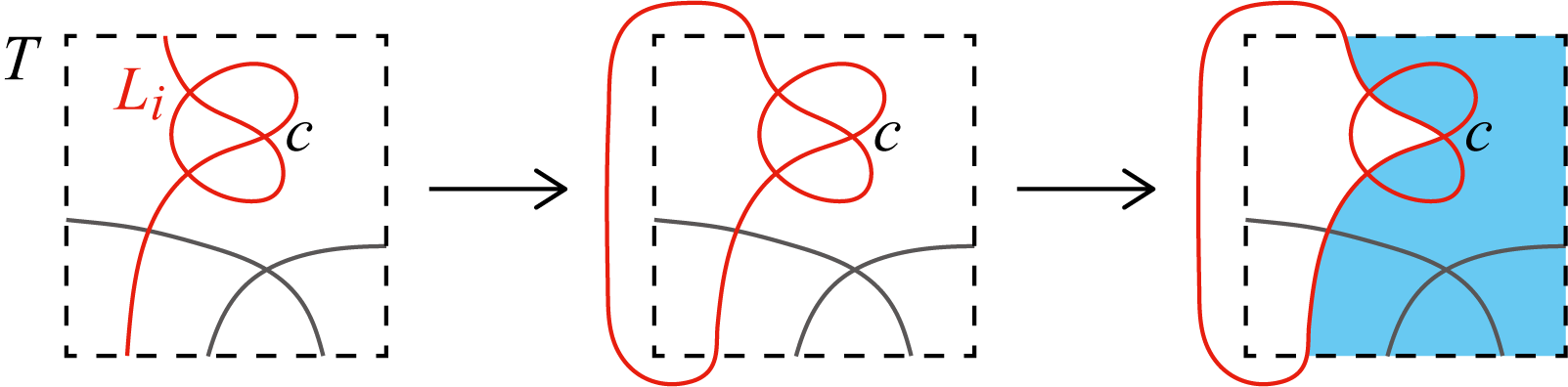}
\caption{Any self-crossing of $L_i$ is changeable by RCCs.}
\label{fig-1-tangle-cc}
\end{figure}

\vspace{2mm}

\begin{prop}
Each crossing between $L_i$ and $L_j$ ($i \neq j$) is changeable by RCCs.
\label{lem-lilj}
\end{prop}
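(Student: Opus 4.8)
The plan is to reduce the statement to Proposition \ref{lem-li} by first gluing $L_i$ and $L_j$ into a single closed curve. Let $a_1 , a_2$ be the endpoints of $L_i$ on $\partial S$ and $b_1 , b_2$ those of $L_j$; these are four distinct points. Of the two ways of pairing $\{a_1 , a_2\}$ with $\{b_1 , b_2\}$, at least one is realized by two disjoint arcs in the disk $S^2 \setminus \mathrm{int}(S)$ --- a quick inspection of the cyclic order of the four points on $\partial S$ confirms this in both the ``interleaved'' and the ``non-interleaved'' case. Adding these two arcs turns $L_i \cup L_j$ into a single closed curve $K$; write $T'$ for the resulting diagram. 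Since the added arcs lie outside $S$ and are disjoint from each other and, except at their endpoints, from the rest of the diagram, $T'$ has exactly the crossings of $P$, and the crossing $c$ between $L_i$ and $L_j$ is now a self-crossing of $K$.

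Next I would run the argument of Propositions \ref{lem-ki} and \ref{lem-li} on $K$: splice $K$ at $c$ into two closed curves, take the set of regions of $T'$ corresponding to the shaded regions of a checkerboard coloring of one of these two curves, and correct the reducible crossings of $T'$ by symmetric differences exactly as in the proof of Proposition \ref{lem-ki}. This produces a set $\Sigma$ of regions of $T'$ whose RCCs change only $c$. Finally I would translate $\Sigma$ back to the crease pattern, replacing each region of $T'$ by the faces of $P$ it contains, so that a region of $T'$ lying entirely outside $S$ contributes nothing; because $T'$ has no crossings outside $S$, the parity count at every vertex of $P$ agrees with the one in $T'$, so the resulting set of faces changes only $c$ by RCCs, which proves the proposition. (Alternatively, once $c$ is exhibited as a self-crossing of the knot component $K$ of $T'$, Theorem \ref{thm-rcc-cc-l} immediately supplies such a region set of $T'$, and the same translation finishes the argument.)

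The step I expect to be the crux is this last translation --- verifying that the region set obtained for the closed-up diagram $T'$ really descends to a legitimate RCC on the crease pattern $P$. In Proposition \ref{lem-li} one of the two curves obtained by splicing lies inside $S$, so the region set can be taken to avoid the outside automatically; here both of those curves necessarily run outside $S$ (each contains one of the gluing arcs), so one must instead use that the gluing arcs are disjoint and carry no creases, whence $T'$ introduces no new crossing and the faces of $P$ making up any region of $T'$ toggle exactly the vertices that region does. Everything else, in particular the handling of reducible crossings, is inherited verbatim from the proof of Proposition \ref{lem-ki}.
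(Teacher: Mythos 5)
Your proof is correct, but it reduces the statement along a different (though closely parallel) route from the paper's. The paper splices at the crossing $c$ itself: it cuts $L_i$ and $L_j$ at $c$, reconnects the half-strands into an open path, closes that path up outside $S$ into a knot projection, and then invokes Lemma \ref{prop-pc-in} to get an ineffective set containing exactly one of the two regions meeting at $c$; pulled back to the tangle this changes only $c$, after the usual corrections at reducible crossings. You instead join $L_i$ and $L_j$ at their endpoints on $\partial S$ first, so that $c$ becomes a self-crossing of a single closed curve $K$, and then quote the self-crossing argument of Proposition \ref{lem-ki} (equivalently Theorem \ref{thm-rcc-cc-l}); your observation that a non-interleaved pairing of the four endpoints always exists is correct, so $K$ can be formed with no new crossings. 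Both reductions rest on the same checkerboard machinery of \cite{rcc-uo}; what yours buys is that no separate prohibited/compulsory lemma is needed, at the cost of an extra closing-up step. The one place your write-up is too quick is the translation from regions of $T'$ back to faces of $P$: a single region of $T'$ may contain two distinct faces of $P$ that meet at a vertex $v$ (two boundary faces joined through the exterior of $S$), and at such a $v$ the region set and the face set need not toggle the lamp the same number of times, so the parity claim as stated fails there. This is not fatal, because any such $v$ is by definition a reducible crossing of $T'$ (a reducing circle runs through the shared region and meets $T'$ only at $v$), so the discrepancy is absorbed by the reducible-crossing correction you already invoke; the cleaner formulation, and the one implicit in the paper, is to work with faces of $P$ from the outset, selecting the faces contained in the shaded regions of the spliced component and correcting at every crossing that is reducible when the reducing curve is allowed to leave $S$.
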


\begin{proof}
Cut $L_i$ and $L_j$ at the crossing $c$. 
By connecting ones of the cut paths of $L_i$ and $L_j$, we obtain another open path and then obtain a knot projection by connecting the endpoints outside $S$. 
Take an ineffective set of the knot projection which includes one of the regions touching $c$ and does not include the other region touching $c$ (Lemma \ref{prop-pc-in}). 
Then, the set changes only $c$ by RCCs, after some modifications at reducible crossings if necessary. 
See Figure \ref{fig-pi-pj-cc}. 
\end{proof}

\begin{figure}[h]
\centering
\includegraphics[width=27mm]{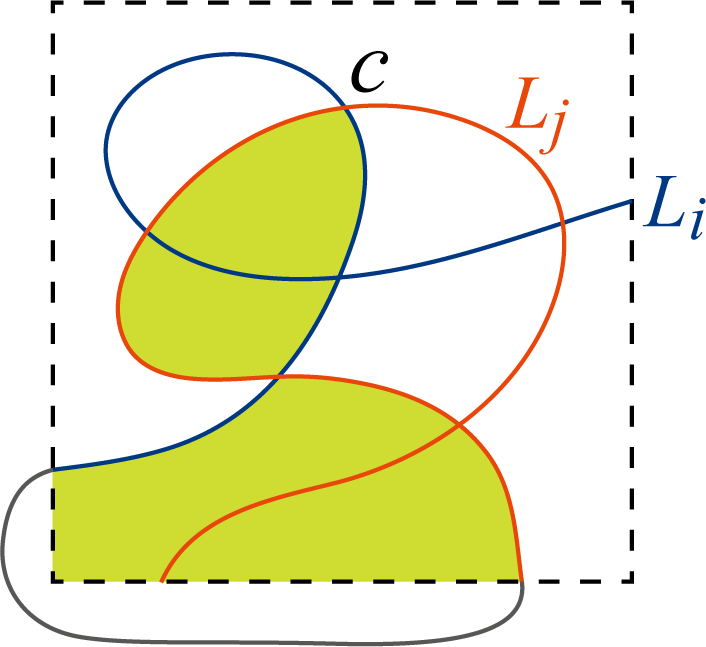}
\caption{Cut $L_i$ and $L_j$ and create another open path.}
\label{fig-pi-pj-cc}
\end{figure}

\noindent We note that when the tangle includes no closed curves, Propositions \ref{lem-li} and \ref{lem-lilj} follow results in \cite{HSS} on diagrams of a connected spatial graphs by considering the tanglized crease pattern as a diagram of a connected trivalent spatial graph. 
We have the following proposition.

\vspace{2mm}

\begin{prop}
Any single crossing between $K_i$ and another component is unchangeable by RCCs.
\label{lem-ki-other}
\end{prop}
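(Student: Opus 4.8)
The plan is to mimic the linear-algebra / parity argument that underlies Theorem~\ref{thm-rcc-cc-l}: a crossing between two different components of a link diagram is unchangeable by RCCs because there is a ``weight'' (a mod-2 linear functional on the crossings) that is preserved by every RCC but is odd at that crossing. For the tanglized crease pattern I would construct such an invariant for a crossing $c$ between $K_i$ and a distinct component. First I would recall why the standard proof works for links on $S^2$: if $D = D_1 \cup D_2$ is a split-into-two-pieces situation, then for each region $R$, the boundary $\partial R$ meets the set of $D_1$--$D_2$ crossings an even number of times (a region's boundary enters and leaves each ``side'' in pairs), so the parity of the number of flipped inter-component crossings is invariant under a single RCC, hence under any sequence of RCCs. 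The obstacle is that our ambient surface is a disk $S$ with boundary, and $L$-components are arcs ending on $\partial S$, so the usual ``a simple closed curve on $S^2$ separates'' argument is not directly available.

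The key step is therefore to produce, from $K_i$, a simple closed curve $\gamma$ on $S$ whose ``inside'' (the disk it bounds in $S$, using that $K_i$ is a closed curve disjoint from $\partial S$) separates the crossing $c$ from the rest of $S$ in a controlled way. Concretely: $K_i$ is an immersed closed curve in $\mathrm{int}(S)$; by passing to a small regular neighborhood of $K_i$ and resolving each of its self-crossings, one gets an embedded multicurve, and I would choose a component $\gamma$ of it that passes through $c$ (recall $c$ is an intersection of $K_i$ with another component, so $c \in K_i$). Then $\gamma$ is an embedded circle in $\mathrm{int}(S)$, hence bounds a disk $\Delta \subset \mathrm{int}(S)$. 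Assign weight $w(d) = 1$ to every crossing $d$ lying on $\gamma$ that is an intersection of $K_i$ with a component $\neq K_i$, and $w(d)=0$ otherwise; after the neighborhood/resolution bookkeeping this counts inter-component crossings incident to $K_i$ ``as seen along $\gamma$''. By construction $w(c) = 1$ (perhaps after choosing $\gamma$ and the resolution so that $c$ is counted exactly once; since $c$ is a crossing \emph{between} $K_i$ and another component, not a self-crossing of $K_i$, this can be arranged).

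Now I would verify that $\sum_{d \in \partial R} w(d)$ is even for every region $R$ of $T$. The point is that $\partial R$ is a closed curve in $S$, so it crosses the embedded circle $\gamma$ an even number of times, and each such crossing of $\partial R$ with $\gamma$ corresponds to $\partial R$ passing through a crossing $d$ lying on $\gamma$ with $w(d)=1$ precisely when the other strand through $d$ (the one not on $\gamma$) bounds the region $R$ there --- i.e. exactly the inter-component crossings of $K_i$ are the ones where $\partial R$ genuinely intersects $\gamma$ transversally, while at a crossing where both strands belong to $K_i$-neighborhood the local picture contributes an even count. Hence $w$ is constant (mod $2$) on every RCC, so no sequence of RCCs can flip $c$ alone. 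The routine part is the careful local analysis at each type of crossing (self-crossing of $K_i$, crossing of $K_i$ with another component, crossing not on $\gamma$) to confirm the parity bookkeeping; I will not grind through all the cases here.

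\textbf{Main obstacle.} The hard part is the first structural step: making precise the passage from the immersed closed curve $K_i$ (which may have self-crossings --- indeed the authors note in the footnote they have no example where it does, but the proof should cover it) to an embedded separating circle $\gamma$ carrying $c$, and checking that the induced weight really is odd at $c$ and satisfies the region-sum parity. Once $\gamma$ and $w$ are in place, invariance under RCC and the conclusion are immediate. If resolving self-crossings of $K_i$ turns out to obstruct getting $c$ on a single embedded component, the fallback is to argue directly with the mod-2 intersection number of $\partial R$ with $K_i$ itself (counted in $\mathrm{int}(S)$, where $K_i$ is null-homologous), bypassing the explicit disk $\Delta$.
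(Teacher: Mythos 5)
Your underlying mechanism is the same as the paper's. The paper defines the lamp-linking number $l(K_i)=\tfrac12\sum_c f(c)$, which is supported exactly on the crossings between $K_i$ and the other components, and its proof rests on the single claim that every region boundary contains an even number of such crossings, so that the parity of $l(K_i)$ is preserved by every RCC while a lone crossing change at $c$ would flip it. That is precisely your weight-function argument, and your ``fallback'' --- work with the mod-$2$ intersection of $\partial R$ with $K_i$ itself rather than with an embedded representative --- is exactly what the paper does. The evenness claim has a one-line proof you could supply: traverse the closed curve $\partial R$ and consider the maximal runs of edges belonging to $K_i$; a run is not interrupted at a corner that is a self-crossing of $K_i$ (both adjacent edges lie on $K_i$) and cannot end on $\partial S$ (in type I, $K_i$ is a closed curve in $\mathrm{int}(S)$), so runs begin and end exactly at corners that are crossings of $K_i$ with another component, and these therefore come in pairs.

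The route you designate as the main one, however, has a genuine gap, and it sits exactly where you flag the ``hard part.'' Suppose $K_i$ has a self-crossing $d_0$, smooth it so that $K_i$ splits into embedded circles, and let $\gamma$, $\gamma'$ be the two circles meeting at $d_0$, with $c\in\gamma$. Of the four corners at $d_0$, two opposite ones are bounded by one edge of $\gamma$ and one edge of $\gamma'$ (the planar smoothing separates the other two). Running the run-count above for $\gamma$ alone, the runs of $\gamma$-edges along $\partial R$ now also terminate at such ``mixed'' corners, which carry weight $0$ in your scheme; the identity you actually get is that the number of weighted crossings on $\partial R$ is congruent mod $2$ to the number of mixed corners of $R$ at smoothed self-crossings of $K_i$, and the latter has no reason to be even. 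So a region meeting $d_0$ in exactly one mixed corner gives an RCC that changes your ``invariant,'' and the restriction of the weight to a single component of the resolution destroys exactly the property you need. (In the only case where the construction is safe --- $K_i$ embedded --- the resolution is vacuous and $\gamma=K_i$.) The repair is simply to drop $\gamma$ and put $w(d)=1$ at \emph{all} crossings between $K_i$ and other components, i.e., to promote your fallback to the proof; with the run-counting observation above this closes the argument and coincides with the paper's.
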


\vspace{2mm}

\noindent To prove Proposition \ref{lem-ki-other}, we define the {\it lamp-linking number} of $K_i$, denoted by $l(K_i)$. 
At first, we give a value to each crossing $c$ with lamp as follows: 
If $c$ is a self-crossing of $K_i$, give $f(c)=0$. 
If $c$ is a crossing of the other components, give $f(c)=0$. 
If $K_i$ is a crossing between $K_i$ and another component and the lamp is on, give $f(c)=1$. 
If $K_i$ is a crossing between $K_i$ and another component and the lamp is off, give $f(c)=-1$. 
We define the lamp-linking number $l(K_i)$ of $K_i$ to be $1/2 \sum_{c} f(c)$. 
For example, the crease pattern with lamps in Figure \ref{fig-lamp-linking} has lamp-linking number $(-1+1-1+1)/2 =0$. 
Using Cheng and Gao's method in \cite{cheng-gao}, we show Proposition \ref{lem-ki-other}.

\begin{figure}[h]
\centering
\includegraphics[width=25mm]{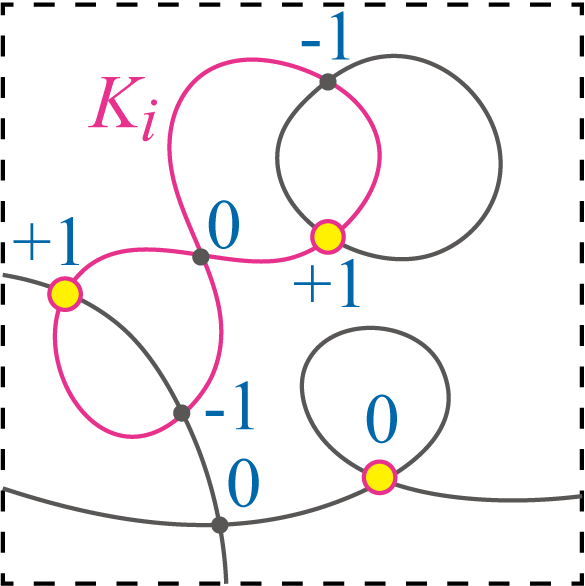}
\caption{The lamp-linking number is zero.}
\label{fig-lamp-linking}
\end{figure}

\vspace{4mm}
\noindent {\it Proof of Proposition \ref{lem-ki-other}.}
The number of the crossing of $K_i$ and others is an even number on any regions of $P$. 
Therefore, the parity of $l(K_i)$ is unchanged by any RCC on $P$. 
Hence, we cannot make a single crossing change between $K_i$ and the others by RCCs, which changes the parity of $l(K_i)$. 
\qed

\subsection{Region Select on 4-regular crease pattern, type II}

In this subsection, we consider 4-regular flat foldable patterns $P$ on a square $S$ which may have vertices on the boundary of $S$. 
Here, since $P$ is flat foldable, each angle around a vertex is less than $180^{\circ}$ and then we can assume that each vertex on $\partial S$ has at most three edges inside $S$. 
We also assume that there are no vertex on $\partial S$ which has only one edge in $\mathrm{int}(S)$.

A {\it generalized tanglize} of a crease pattern of this type is to be defined as the following procedure to obtain a generalized tangle: 
Take a start point on an edge and travel in both directions. 
When meeting a 4-valent vertex, proceed to the edge in the opposite side. 
When meeting a vertex on $\partial S$ which has two edges, proceed to another edge. 
When meeting a vertex on $\partial S$ which has three edges and one comes along a boundary-side edge, proceed to another edge on boundary-side. 
When meeting a vertex on $\partial S$ which has three edges and one comes along the center edge, stop at the vertex. 
Keep traveling until one has come to two end points on $\partial S$ or the path is closed on an edge. 
Thus, we obtain a set of open or closed paths in $S$, where some of the paths may touch $\partial S$. 
We call this a {\it contact-tangle} $T$ with components $K_1, K_2, \dots , K_l, L_1, L_2, \dots , L_m$, where $K_i$ is a closed path and $L_i$ is an open path. 
We may call $K_i$ a knot component as well. 
See Figure \ref{fig-contact-tangle} for examples. 

\begin{figure}[h]
\centering
\includegraphics[width=110mm]{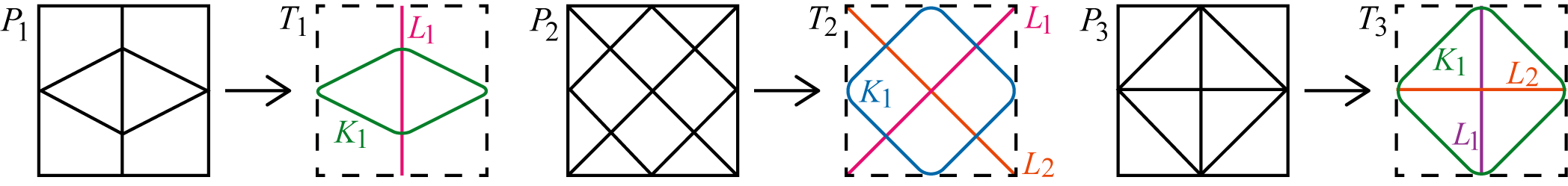}
\caption{Generalized tanglize.}
\label{fig-contact-tangle}
\end{figure}

\vspace{2mm}

For a contact-tangle with lamps, we also define the {\it lamp-linking number}. 
Let $K_i$ be a knot component of a contact-tangle $T$. 
Let $c$ be a crossing or a contacting point on $\partial S$. 
Give a value to each $c$ with lamp as follows: 
If $c$ is a self-crossing or a crossing of the other components than $K_i$, $f(c)=0$. 
If $c$ is a crossing between $K_i$ and another component and the lamp is ON (resp. OFF), $f(c)=1$ (resp. $f(c)=-1$). 
If $c$ is a contact point of $K_i$ with two edges and the lamp is ON (resp. OFF), $f(c)=1$ (resp. $f(c)=-1$). 
If $c$ is a contacting point of $K_i$ with three edges, follow the above rule. 
If $c$ is a contacting point which is not involved with $K_i$, $f(c)=0$. 
We define the lamp-linking number, $l(K_i)$, of $K_i$ to be the value $1/2 \sum _c f(c)$. 

A knot component $K_i$ in a contact-tangle $T$ is said to be an {\it even component} if the total number of crossings between $K_i$ and other components and contact points between $K_i$ and $\partial S$ is even on the boundary for any region of $T$. 
In Figure \ref{fig-contact-tangle}, for example, $T_1$ and $T_2$ have no even components whereas $T_3$ has an even component $K_1$. 
We have the following. 

\vspace{2mm}

\begin{prop}
When $K_i$ is an even component, neither a crossing between $K_i$ and another component nor a contact point of $K_i$ on $\partial S$ is changeable by RCC. 
\end{prop}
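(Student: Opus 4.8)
The plan is to adapt the lamp-linking number argument used in the proof of Proposition \ref{lem-ki-other} (Cheng and Gao's method) to the contact-tangle setting. First I would isolate the combinatorial fact that powers the definition of an even component: for an even component $K_i$ and any region $R$ of the contact-tangle $T$, the points on $\partial R$ at which $f$ is nonzero --- that is, the crossings between $K_i$ and other components, together with the contact points of $K_i$ on $\partial S$, that lie on $\partial R$ --- occur in even number. By hypothesis this is precisely what ``even component'' means, so this step is essentially definitional; I would only spell out that a $3$-valent boundary vertex contributes such a point exactly when $K_i$ is the ``bouncing'' strand and the partner strand belongs to a different component, and that self-crossings of $K_i$ and points not involving $K_i$ contribute nothing.

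Next I would track the effect of a single RCC. Set $g(K_i) = \sum_c f(c)$, so that $l(K_i) = g(K_i)/2$. An RCC at a region $R$ toggles the lamp at every vertex on $\partial R$; a toggle at a point $c$ with $f(c) \neq 0$ replaces $f(c)$ by $-f(c)$, i.e.\ changes that summand by $\mp 2$, while summands with $f(c) = 0$ are unaffected. Since $K_i$ is an even component there are an even number, say $2m$, of toggled points with nonzero $f$, and a sum of $2m$ terms each equal to $\pm 2$ is always a multiple of $4$ (if $a$ of them equal $+2$, the sum is $2(2a - 2m) = 4(a-m)$). Hence any single RCC changes $g(K_i)$ by a multiple of $4$, and therefore so does any finite sequence of RCCs.

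Finally I would derive the contradiction. Changing a single crossing between $K_i$ and another component, or a single contact point of $K_i$ on $\partial S$, flips exactly one lamp sitting at a point where $f(c) = \pm 1$ and leaves every other value of $f$ fixed; this changes $g(K_i)$ by exactly $\pm 2$. Since $\pm 2$ is not a multiple of $4$, this change cannot be realized by any sequence of RCCs, which shows that no such crossing or contact point is changeable by RCC.

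I do not expect a serious obstacle here, since the structure mirrors Proposition \ref{lem-ki-other} almost verbatim. The only points demanding care are checking that an RCC genuinely toggles the lamps located at boundary contact points and at $3$-valent boundary vertices in the way the definition of $f$ anticipates, and confirming that the ``$f(c) = 0$'' cases contribute nothing, so that the parity of the number of toggled nonzero summands on each $\partial R$ is controlled entirely by the even-component hypothesis.
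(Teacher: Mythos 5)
Your argument is correct and is essentially the paper's own proof: both use the lamp-linking number and observe that an RCC changes it by an even amount (you phrase this as $g(K_i)=2l(K_i)$ changing by a multiple of $4$) while a single lamp switch at a crossing with another component or a contact point changes it by an odd amount, giving the obstruction. Your version just spells out the parity bookkeeping that the paper leaves implicit.
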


\begin{proof}
In this case each RCC changes the lamp-linking number of $K_i$ by $2n$ for some $n \in \mathbb{Z}$. 
Hence, any switching of a lamp between $K_i$ and another components or $\partial S$, which changes the lamp-linking number by $\pm 1$, cannot be realized by RCCs.
\end{proof}

\begin{figure}[h]
\centering
\includegraphics[width=40mm]{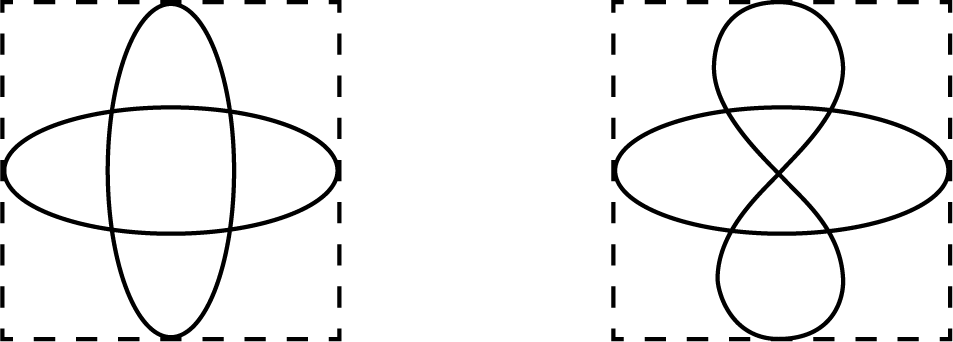}
\caption{The contact-tangles both have no even components. On the contact-tangle on the left-hand side, any crossing or touching point is changeable by RCCs, whereas the other one has unchangeable crossings.}
\label{fig-on-even}
\end{figure}

\vspace{2mm}

\noindent We have some examples of contact-tangles which has non-even components but has unchangeable crossings. 
In Figure \ref{fig-contact-tangle}, we cannot change some lamps of $T_2$, whereas we can change any lamp of $T_1$. 
See also Figure \ref{fig-on-even}. 
For a knot component touching at just one point, we have the following. 

\vspace{2mm}

\begin{prop}
Let $K_i$ be a knot component of $T$ touching $\partial S$ at just one point $c_1$. 
If $c_1$ has exactly two edges inside $S$, then $c_1$ is changeable by RCCs. 
\label{lem-touch-one}
\end{prop}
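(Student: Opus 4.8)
The plan is to reduce the assertion to the knot/open-path situation already handled, by modifying the contact-tangle near the single touch point $c_1$ so that the component $K_i$ becomes an open path, and then invoking Lemma~\ref{prop-pc-in} (or Proposition~\ref{lem-li}) together with corrections at reducible crossings, exactly as in the proofs of Propositions~\ref{lem-li} and~\ref{lem-lilj}. Concretely: since $c_1$ has exactly two edges of $K_i$ inside $S$ and sits on $\partial S$, the two local strands of $K_i$ emanating from $c_1$ run into the interior; cut $K_i$ open at $c_1$ and re-route the two loose ends outside $S$ so as to close them up into a single knot projection $\widehat K_i$ together with the rest of the contact-tangle (this is the same ``connect the endpoints outside $S$'' device used in Figures~\ref{fig-1-tangle-cc} and~\ref{fig-pi-pj-cc}). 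The point $c_1$ is no longer a special touch point but can be treated as an ordinary crossing (or replaced by two regions meeting there), and the faces of $P$ are in bijection with the regions of this auxiliary diagram.

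Next I would apply Lemma~\ref{prop-p-c} or Lemma~\ref{prop-pc-in} to the auxiliary knot projection: there is a set of regions that changes exactly the crossing corresponding to $c_1$ (equivalently, an ineffective set containing one of the two regions abutting $c_1$ but not the other), so that under RCCs only the lamp at $c_1$ is switched in $\widehat K_i$. Translating this set of regions back to faces of $P$, it switches the lamp at $c_1$ and leaves every other lamp that came from an irreducible crossing unchanged. As in the proof of Proposition~\ref{lem-ki}, the reducible crossings of the contact-tangle may receive an odd number of selected regions; these are corrected one at a time, in an order compatible with the partial order $\preceq$ on reducible crossings, by splicing at each offending reducible crossing, choosing a knot component, taking the checkerboard-shaded set with the outer region unshaded, and forming the symmetric difference with the set constructed so far. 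After all corrections, the resulting set of faces of $P$ realizes a region-crossing-change sequence switching only the lamp at $c_1$.

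The step I expect to be the main obstacle is verifying that the re-routing of the two ends of $K_i$ outside $S$ is legitimate, i.e.\ that it really does produce a single connected auxiliary curve (as opposed to two circles) and that the correspondence between faces of $P$ and regions of the auxiliary diagram is a bijection on exactly the data that matters for RCC parity. Here the hypothesis that $c_1$ has \emph{exactly} two edges of $K_i$ inside $S$ is essential: it guarantees that cutting at $c_1$ opens $K_i$ into a single arc with two endpoints on $\partial S$, which can be closed up outside $S$ without creating extra components, so the machinery of Lemma~\ref{prop-pc-in} applies verbatim. (If $c_1$ were a three-edge touch point, one strand of $K_i$ would terminate there and the same reduction would not be available, which is why that case is excluded from the statement.) Once this bijection is in place, the rest is the now-standard symmetric-difference bookkeeping over reducible crossings.
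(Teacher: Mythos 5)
Your proposal is essentially the paper's argument: the paper likewise applies Lemma~\ref{prop-pc-in} to $K_i$ viewed as a knot projection, taking an ineffective set that contains the region abutting $c_1$ not bounded by $\partial S$ and omits the other, so that only the lamp at $c_1$ is toggled, and then corrects at reducible crossings of $T$. One adjustment: cutting at $c_1$ and rerouting outside $S$ does not turn $c_1$ into an ordinary crossing of $\widehat{K_i}$ (no new crossing is created there), so it is your parenthetical ineffective-set formulation, not Lemma~\ref{prop-p-c} applied ``at $c_1$,'' that actually carries the proof.
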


\begin{proof}
When we ignore other components, there are just two regions of $K_i$ which touch $c_1$. 
We call the region which is not bounded by $\partial S$ the region $R_1$, and the other one $R_2$. 
Take the ineffective set of $K_i$ which includes $R_1$ and does not include $R_2$ (Lemma \ref{prop-pc-in}). 
By the RCCs on the regions (and some modifications at reducible crossings of $K_i$ or $T$), only $c_1$ is changed. 
We remark that the region $R_2$ is divided in $T$ because  any flat foldable crease pattern does not have non-convex face. 
\end{proof}

\vspace{2mm}

\noindent For self-crossings, we have the following.

\vspace{2mm}

\begin{prop}
Let $K_i$ be a knot component of $T$ touching $\partial S$ at just one point $c_1$. 
If $c_1$ has exactly two edges inside $S$, then any self-crossing $c$ of $K_i$ is changeable by RCCs. 
\end{prop}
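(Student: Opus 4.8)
The plan is to imitate the proofs of Propositions~\ref{lem-ki} and \ref{lem-li}: first ignore every component of $T$ other than $K_i$, build a candidate set of faces from a splice-and-colour construction for the closed curve $K_i$, then repair the parity at the finitely many vertices where the construction misbehaves, absorbing the one genuinely new defect (at the boundary contact point $c_1$) by means of Proposition~\ref{lem-touch-one}.

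First I would splice $K_i$ at the self-crossing $c$ so as to obtain a two-component link $J_1\cup J_2$, use the checkerboard colouring determined by one of these components exactly as in the proof of Proposition~\ref{lem-ki}, and let $X_0$ be the set of faces of $T$ corresponding to the shaded faces. As in that proof, around every irreducible self-crossing of $K_i$ other than $c$, around every irreducible self-crossing of any other component, and around every crossing between two distinct components, an even number of faces of $X_0$ appears, while an odd number appears around $c$. Then I would run the same reducible-crossing modification: list the reducible crossings of $T$ in an order compatible with the partial order $\preceq$ and, whenever the current set meets a reducible crossing in an odd number of faces, take the symmetric difference with the shaded set obtained by splicing there and colouring a chosen knot piece so that the outer face is unshaded. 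After finitely many steps I reach a set $X$ of faces of $T$ such that every crossing of $T$ other than $c$ meets $X$ in an even number of faces, while $c$ meets it in an odd number; the same evenness should hold at every contact point of $T$ except possibly the single contact point $c_1$ of $K_i$ (here one uses that $c_1$ has exactly two edges inside $S$, so that the two or three faces of $T$ incident to $c_1$ enter the count in the controlled way analysed in Proposition~\ref{lem-touch-one}). Thus the RCCs at $X$ switch the lamp at $c$ and, possibly, also the lamp at $c_1$, but no other lamp.

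Finally, if the RCCs at $X$ also switch the lamp at $c_1$, I would invoke Proposition~\ref{lem-touch-one}, which (again because $c_1$ has exactly two edges inside $S$) provides a set $C_1$ of faces of $T$ whose RCCs switch only the lamp at $c_1$; replacing $X$ by the symmetric difference $X\oplus C_1$ then yields a set of faces whose RCCs switch only the lamp at $c$. Hence every self-crossing $c$ of $K_i$ is changeable by RCCs.

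I expect the main obstacle to be the parity bookkeeping at the boundary vertices, which does not arise in the type~I setting of Proposition~\ref{lem-ki}: one must verify that the splice-and-colour construction, after the reducible-crossing modifications, still meets every contact point of $T$ — two-edge and three-edge alike, and contact points of the other components as well as of $K_i$ — in an even number of faces, with $c_1$ the only possible exception, and that this exception is exactly the one removed by Proposition~\ref{lem-touch-one}. The hypothesis that $c_1$ has exactly two edges inside $S$ is what keeps that last exception manageable; without it the local face pattern at $c_1$ is no longer the one handled by Proposition~\ref{lem-touch-one}, and indeed the even-component examples show that changeability can fail.
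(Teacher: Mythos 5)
Your proposal is correct and follows essentially the same route as the paper: the paper simply takes a set $U$ of regions of $K_i$ changing only $c$ (whose existence you re-derive in detail via the splice-and-colour argument of Proposition~\ref{lem-ki}) and, if the RCCs at $U$ also switch the lamp at $c_1$, cancels that defect by the set provided by Proposition~\ref{lem-touch-one}. Your extra bookkeeping at the boundary vertices is a more explicit version of what the paper leaves implicit, but the key step is identical.
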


\begin{proof}
Let $U$ be a set of regions of $K_i$ which changes only a self-crossing $c$ of $K_i$ by RCCs. 
If $U$ changes $c_1$, not only $c$, by the RCCs on $T$, then apply the RCCs at the regions of Proposition \ref{lem-touch-one}.
\end{proof}

\section{Circled region unlinking number}
\label{section-li}

\noindent In this section, using the concept of the origami Region Select, a new unlinking number associated with RCC is defined. 
Since a RCC is an unknotting operation on a knot diagram, the region unknotting number $u_R(D)$ is well-defined to be the minimal number of RCCs to transform $D$ into a diagram of a trivial knot. 
By Theorem \ref{thm-rcc-cc-l} of Cheng and Gao, however, some types of link diagrams cannot be unlinked by any RCCs. 
Moreover, Cheng gave a necessary and sufficient condition for links to be unlinked by RCCs\footnote{See \cite{FNS} for spatial-graphs.}: 

\vspace{2mm}

\begin{thm}[\cite{cheng}]
A diagram of a link $L$ can be transformed into a diagram of the trivial link by RCCs if and only if $L$ is proper \footnote{A {\it proper link} is a link such that the sum of the linking numbers between a component and other components takes an even number for any component. For example, a Hopf link is an improper link and the Borromean ring is a proper link.}. 
\end{thm}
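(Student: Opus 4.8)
The plan is to prove the two implications separately: ``only if'' by an RCC-invariance of linking parity, and ``if'' by reducing the problem to a linear-algebra characterization of which crossing-change patterns RCCs can realize.

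\medskip
\emph{The ``only if'' direction.} First I would show that, for each component $K_i$, every RCC flips an even number of crossings between $K_i$ and the other components. Walking once around the boundary of any region $R$ and labelling each edge of $\partial R$ by whether it lies on $K_i$, the label changes exactly at the corners of $\partial R$ that are crossings of $K_i$ with another component, and it must change an even number of times around the closed loop $\partial R$; so $R$ meets an even number of such crossings. Consequently the parity of $\sum_{j\ne i}\mathrm{lk}(K_i,K_j)$ — one half the signed count of crossings of $K_i$ with the others — is an RCC-invariant; this is exactly the lamp-linking number argument of the proof of Proposition~\ref{lem-ki-other}, carried over to a link diagram. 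Since that sum is $0$ for every component of a trivial-link diagram, any link that RCCs carry to a trivial link must be proper.

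\medskip
\emph{The ``if'' direction — reduction.} Work over $\mathbb F_2$, identifying a ``set of crossings to flip'' with a vector in $\mathbb F_2^{C}$, $C$ the crossing set of $D$. The patterns realizable by RCCs form the column space $\mathcal V$ of the crossing--region incidence matrix (a crossing on the boundary of a region counted mod $2$), and by the paragraph above $\mathcal V\subseteq\{w:\ w\cdot n_i=0\ \forall i\}$, where $n_i$ is the indicator of the crossings between $K_i$ and the rest. The heart of the argument is the converse:
\begin{quote}
(\textbf{Key claim.}) A crossing-change pattern is realizable by RCCs if and only if, for every $i$, it flips an even number of crossings between $K_i$ and the other components.
\end{quote}
Granting this, the theorem follows. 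Let $L$ be proper and $D$ any diagram of it. Every link diagram becomes a diagram of the trivial link after suitable crossing changes, so fix such a pattern $w$; if $W_i$ denotes the crossings of $K_i$ with other components flipped by $w$, then the new diagram satisfies $0=\sum_{j\ne i}\mathrm{lk}_D(K_i,K_j)-\sum_{c\in W_i}\mathrm{sign}_D(c)$, whence $|W_i|\equiv\sum_{c\in W_i}\mathrm{sign}_D(c)=\sum_{j\ne i}\mathrm{lk}_D(K_i,K_j)\pmod 2$ is even because $L$ is proper. So $w$ meets the hypothesis of the Key claim, hence is realized by RCCs, carrying $D$ to a trivial-link diagram. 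Split diagrams reduce to the connected case, as $L$ is proper iff each split summand is.

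\medskip
\emph{The hard part.} What remains is the ``if'' half of the Key claim. Since Theorem~\ref{thm-rcc-cc-l} makes every self-crossing of a component individually realizable without disturbing any inter-component crossing, it suffices to realize every pattern that is supported on inter-component crossings and is even on each $K_i$. I would encode such a pattern as an even subgraph of the multigraph on the components whose edges are the flipped inter-crossings; an even subgraph of a loopless multigraph is an edge-disjoint union of cycles, so it is enough to realize (A) the flip of two crossings lying between a single pair of components, and (B) the flip of one crossing along each edge of a cycle $K_{i_1}-K_{i_2}-\cdots-K_{i_p}-K_{i_1}$, each possibly composed with self-crossing changes that are afterwards corrected via Theorem~\ref{thm-rcc-cc-l}. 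For (A) and (B) I would reuse the splicing/checkerboard machinery of this paper: splice or reconnect the relevant components into a knot projection (or a checkerboard-colorable subdiagram), take its shaded regions, and repair the leftover discrepancies one at a time at reducible crossings in an order compatible with the partial order $\preceq$, exactly as in the proofs of Propositions~\ref{lem-ki}, \ref{lem-li}, \ref{lem-lilj} and in \cite{rcc-uo}, \cite{cheng-gao}. (Equivalently, the Key claim asserts that the ineffective sets of a connected $n$-component diagram are precisely the $\mathbb F_2$-span of the two checkerboard colourings and the ``mod-$2$ winding regions'' $B_i=\{R:\ \text{winding of }K_i\text{ at }R\text{ is odd}\}$ — an $(n+1)$-dimensional space whose single relation is that $\sum_i B_i$ is a checkerboard set.) I expect the genuine obstacle to be the explicit construction of the RCCs realizing moves (A) and (B) while keeping their side effects on reducible crossings under control — equivalently, ruling out any ineffective set beyond those listed; everything else is bookkeeping.
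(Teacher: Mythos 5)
The paper does not prove this statement; it is quoted from Cheng's paper \cite{cheng} as a known result, so there is no internal proof to compare against. Judged on its own terms, your ``only if'' direction is correct and complete: the observation that every region is incident to an even number of crossings between $K_i$ and the other components (the ``on $K_i$ / not on $K_i$'' label changes an even number of times around a closed boundary walk, and only at such crossings) is exactly the lamp-linking-number argument the paper itself uses in Proposition \ref{lem-ki-other}, and it does give that the parity of $\sum_{j\neq i}\mathrm{lk}(K_i,K_j)$ is an RCC-invariant, hence that only proper links can be unlinked. Your reduction of the ``if'' direction to the Key claim, via the sign computation showing that a proper link admits an unlinking pattern of crossing changes that is even on each component, is also correct.

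However, the ``if'' half of the Key claim is the entire mathematical content of Cheng's theorem, and the proposal does not prove it: moves (A) and (B) are only asserted to be realizable ``by reusing the splicing/checkerboard machinery,'' and you yourself flag their construction as ``the genuine obstacle.'' This is a genuine gap, not bookkeeping. The machinery of Propositions \ref{lem-ki}--\ref{lem-lilj} does not transfer directly: splicing an inter-component crossing merges two components into one rather than splitting one knot into two, so the resulting checkerboard set has side effects at inter-component crossings involving the \emph{other} components, whose parities must be controlled, and the repair step at reducible crossings cannot absorb these (an inter-component crossing is never nugatory, hence never reducible). Equivalently, your parenthetical reformulation --- that the ineffective sets are exactly the span of the two checkerboard sets and the winding-parity sets $B_i$ --- is precisely the dimension count that has to be established; it is the main theorem of \cite{cheng-gao} for two components and of \cite{cheng} in general. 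As written, the proposal is a correct and well-organized reduction wrapped around a citation-shaped hole where the hard construction should be.
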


\vspace{2mm}

\noindent Using the idea of the origami Region Select, we define a new operation which can unlink any link, even for improper links. 
Let $D$ be a link diagram on $S^2$. 
Give a simple closed curve $C$ on $S^2$ so that $C$ intersects arcs of $D$ transversely avoiding the crossing points of $D$. 
Then, $S^2$ is divided into two discs, say $S_N$ and $S_S$, and $D$ is also divided into two tangles, $D_N$ on $S_N$ and $D_S$ on $S_S$. 
We will define a RCC on $D_N$ or $D_S$ in a same manner to the origami Region Select. 
A {\it region} of $D$ with $C$ is a connected portion of $S^2$ bounded by arcs of $D$ or $C$, and a RCC on a region is a set of crossing changes at the crossings on the boundary of the region.  
The {\it circled region unlinking number}, $u_{\textcircled{R}}(D, C)$, of $D$ with $C$ is the minimal total number of RCCs on $D_N$ and $D_S$ required to transform $D$ into a diagram of a trivial link. 
The {\it circled region unlinking number}, $u_{\textcircled{R}}(D)$, of $D$ is the minimal value of $u_{\textcircled{R}}(D, C)$ for all $C$ (See Figure \ref{fig-ur-two}). 
We have the following proposition.

\begin{figure}[h]
\centering
\includegraphics[width=25mm]{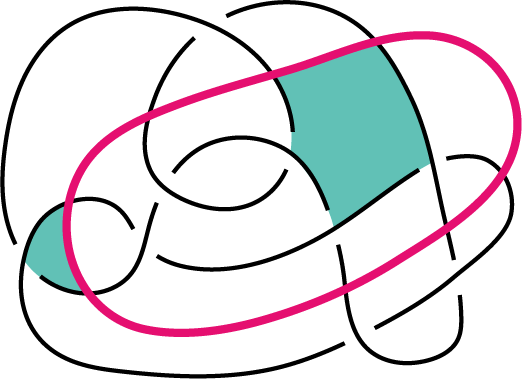}
\caption{The diagram $D$ of the link $9^2_{22}$, or $L9a23$, has $u_{\textcircled{R}}(D, C)=2$ with the circle $C$. We note that $u(9^2_{22})=4$ and $9^2_{22}$ is an improper link.}
\label{fig-ur-two}
\end{figure}

\vspace{2mm}
\begin{prop}
The circled region unlinking number $u_{\textcircled{R}}(D)$ takes a finite number for any link diagram $D$. 
\label{prop-finite}
\end{prop}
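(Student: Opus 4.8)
The plan is to show that for an arbitrary link diagram $D$ there is a single simple closed curve $C$ and a finite sequence of RCCs on $D_N$ and $D_S$ carrying $D$ to a diagram of the trivial link; then $u_{\textcircled{R}}(D)\le u_{\textcircled{R}}(D,C)<\infty$. I would first fix a target diagram: it is standard that any link diagram can be turned into a diagram of the trivial link by finitely many crossing changes — order the components, choose a base point on each, and change exactly those crossings that are not in descending position, so that the result is a descending diagram, which represents the trivial link. Let $X$ be the (finite) set of crossings of $D$ so changed. It then suffices to realize, for one curve $C$ chosen once and for all, each individual crossing change at a crossing of $D$ as a finite set of RCCs on $D$ with $C$; since a set of regions realizing a prescribed crossing change is determined by the planar arrangement $D\cup C$ alone — and crossing changes do not alter this arrangement — these realizations for the crossings in $X$ may be applied in succession, their cumulative effect being exactly the change of all of $X$, in a finite total number of RCCs.

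To choose $C$, I would start from any simple closed curve in general position with respect to $D$; as long as some component $K$ of $D$ is disjoint from the curve, push a small finger of the curve across $K$ away from the crossings of $D$, a move that only creates new intersection points. After finitely many such moves one obtains a simple closed curve $C$ meeting every component of $D$ transversely and missing every crossing of $D$. Because crossing changes move neither the arcs of $D$ nor its components, this property of $C$ persists throughout the unlinking process.

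With such a $C$, each of the tangles $D_N$, $D_S$ consists only of open strands, since every component of $D$ is cut by $C$ into arcs. Hence a crossing of $D$ between two distinct link components is, in $D_N$ or in $D_S$, a crossing between two distinct open strands, and is changeable by RCCs on $D_N$ or on $D_S$ by Proposition \ref{lem-lilj} (which for general tangles is also covered by the results of \cite{HSS}). A self-crossing of a component of $D$ is changeable by RCCs on $D$ itself by Theorem \ref{thm-rcc-cc-l}; and every RCC on $D$ is a symmetric difference of RCCs on $D$ with $C$, because each region of $D$ is a union of regions of $D$ with $C$ and, $C$ missing the crossings of $D$, this decomposition toggles each crossing of $D$ with the right parity — so self-crossings are changeable by RCCs on $D$ with $C$ as well. (Alternatively, a self-crossing of $D$ is, in $D_N$ or $D_S$, either a self-crossing of one open strand or a crossing of two of them, handled by Propositions \ref{lem-li} and \ref{lem-lilj}.)

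Combining these, every crossing of $D$ — in particular every $c\in X$ — is changeable by a finite set of RCCs on $D$ with $C$; applying these in turn transforms $D$ into the descending diagram, hence into a diagram of the trivial link, using finitely many RCCs, so $u_{\textcircled{R}}(D,C)$, and therefore $u_{\textcircled{R}}(D)$, is finite. I expect the main point needing care to be the handling of $C$: that a simple closed curve can be made to meet every component of $D$, and that passing to the finer arrangement $D\cup C$ neither destroys the changeability of self-crossings nor introduces an obstruction for crossings between components, so that Theorem \ref{thm-rcc-cc-l} and Propositions \ref{lem-li} and \ref{lem-lilj} genuinely apply to $D_N$ and $D_S$.
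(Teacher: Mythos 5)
Your proof is correct and follows essentially the same route as the paper: choose $C$ so that it meets every component of $D$, making both tangles $D_N$ and $D_S$ consist only of open strands (the paper takes $C$ to be the boundary of a neighborhood of a simple path joining base points on all components, rather than your finger-pushing construction, but the effect is identical), and then invoke Propositions \ref{lem-li} and \ref{lem-lilj} to realize each of the finitely many crossing changes needed to reach a descending diagram. The one shaky sub-step --- the claim that every RCC on $D$ refines to a symmetric difference of RCCs on $D$ with $C$, whose parity can fail when a crossing meets a region of $D$ in two corners that $C$ separates into different subregions --- is harmless because your parenthetical alternative, treating a self-crossing of $D$ as a self-crossing of one open strand or a crossing of two open strands in $D_N$ or $D_S$, already covers that case.
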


\vspace{2mm}

\begin{proof}
When $D$ is a knot diagram, $D$ is unknotted by some RCCs with any $C$. 
Let $D$ be a diagram of an $n$-component link ($n \geq 2$).  
On $D$, take a base point on an edge for each knot component. 
Connect the $n$ base points by a simple path $P$. 
Take the boundary $C$ of a neighborhood of the path with suitable thickness so that $C$ intersects arcs of $D$ transversely. 
Then, $C$ bounds tangles on both sides which have no closed curves. 
By Proposition \ref{lem-li}, we can make any crossing change in the tangles. 
\end{proof}

\begin{figure}[h]
\centering
\includegraphics[width=60mm]{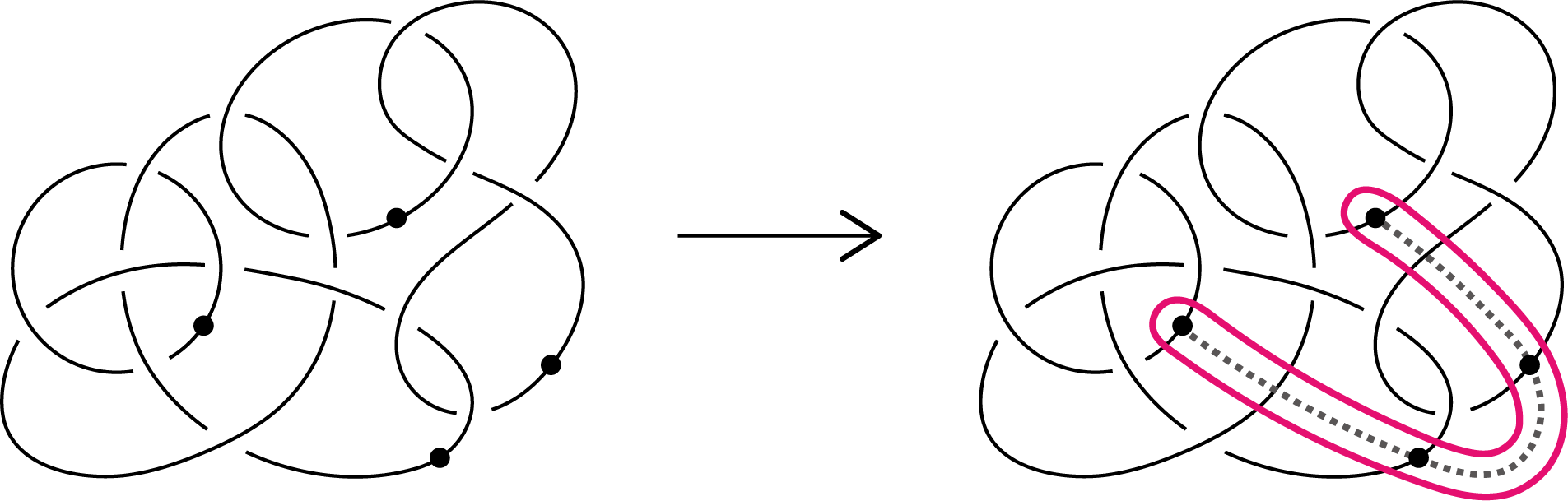}
\caption{Take a base point on an edge for each component and connect the points by a simple path $P$, as shown by the broken curve. Take the boundary $C$ of a neighborhood of $P$, as shown by the pink curve. }
\label{fig-comp-path}
\end{figure}

\vspace{2mm}

\noindent Let $u(D)$ denote the classical unlinking number of $D$, namely, the minimal number of crossing changes which are required to transform $D$ into a diagram of the trivial link. 

\vspace{2mm}

\begin{prop}
The inequality $u_{\textcircled{R}}(D) \leq u(D)$ holds for any link diagram $D$. 
\label{prop-uru}
\end{prop}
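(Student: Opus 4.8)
The plan is to show that after adjoining a suitably chosen circle $C$ every crossing change in a shortest classical unlinking sequence for $D$ can be realized by a single RCC on one side of $C$, so that $u_{\textcircled{R}}(D,C)$ is at most the length of that sequence.

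First I would reduce to a combinatorial claim. Let $c_1,\dots,c_k$ be the crossings at which crossing changes are made in a shortest unlinking sequence for $D$, so $k=u(D)$; we may assume $k\ge 1$, the case $k=0$ being trivial, and we may assume the $c_i$ are pairwise distinct, since in a shortest sequence no crossing can occur more than once (a crossing occurring an even number of times contributes nothing, and one occurring an odd number of times greater than one can be replaced by a single change, each case shortening the sequence). It then suffices to produce one simple closed curve $C$ on $S^2$, meeting $D$ transversely and avoiding the crossings of $D$, together with regions $W_1,\dots,W_k$ of $(D,C)$, each lying in $S_N$ or in $S_S$, such that for each $i$ the only crossing of $D$ on $\partial W_i$ is $c_i$. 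Granting this, the RCC at $W_i$ is exactly the crossing change at $c_i$; since a crossing change leaves the underlying projection, the curve $C$, and all regions unchanged, the RCCs at $W_1,\dots,W_k$ are mutually independent and together reproduce the chosen unlinking sequence, whence $u_{\textcircled{R}}(D)\le u_{\textcircled{R}}(D,C)\le k=u(D)$.

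To construct $C$, for each $i$ I would pick one of the local quadrants at $c_i$, i.e.\ a small wedge with apex $c_i$ bounded by initial sub-arcs of the two strands of $D$ through $c_i$, and let $C$ make, near $c_i$, a short simple excursion that enters this quadrant by crossing one strand close to $c_i$, passes near the apex, and exits by crossing the other strand close to $c_i$. This excursion cuts off a small region $W_i$ bounded by the two sub-arcs and an arc of $C$; if the excursion is taken sufficiently close to $c_i$, these two sub-arcs carry no crossing of $D$ other than $c_i$, so $c_i$ is the only crossing of $D$ on $\partial W_i$ (the two points of $C\cap D$ on $\partial W_i$ are not crossings of $D$ and are irrelevant to an RCC). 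Finally I would string the $k$ excursions together into a single embedded simple closed curve by following a simple arc on $S^2$ that visits them one after another and returns to the start, thickening it into thin, pairwise-disjoint bands in general position with respect to $D$, and taking $C$ to be the resulting boundary curve.

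The step needing care is this last one, when several of the $c_i$ are crowded together — for instance when two of them are the two corners of a bigon, or when the quadrant chosen at $c_i$ lies in a face that also meets some $c_j$. This is routine: distinct crossings have disjoint small wedges available around them, and the freedom in which quadrant to use at each $c_i$, together with a general-position argument, allows the connecting bands to be routed without self-intersections. This is in the same spirit as the constructions producing crossing changes from RCCs in closed-curve-free tangles (Propositions \ref{lem-li} and \ref{prop-finite}); the extra point here is that each crossing change must be realized by a \emph{single} RCC, hence the very special regions $W_i$, each incident to exactly one crossing, and all of them compatible with one curve $C$.
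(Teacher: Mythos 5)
Your proposal is correct and is essentially the paper's own argument: the paper likewise takes the $k$ crossings of a minimal classical unlinking sequence, joins them by a simple path, and lets $C$ be the boundary of a thin neighbourhood of that path (a small circle around $c_1$ when $k=1$), so that on the neighbourhood side each $c_i$ is incident to a small region containing no other crossing and is therefore switched by a single RCC. Your quadrant excursions strung along a connecting arc yield the same kind of curve; just note that the connector should be an open arc whose neighbourhood boundary is a single embedded circle, rather than a loop that ``returns to the start.''
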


\begin{proof}
Let $c_1 , c_2 , \dots , c_k$ be crossing points of $D$ such that the crossing changes at the crossings transform $D$ into a diagram of a trivial link. 
When $k \geq 2$, in the same way to the proof of Proposition \ref{prop-finite}, connect the $k$ crossing points by a simple path $P$, and take the boundary $C$ of a neighborhood of $P$ with appropriate thickness. 
When $k=1$, just take a small circle around $c_1$. 
Then, in the neighborhood-side of $C$, we can make each crossing change by a single RCC. 
\end{proof}

\begin{figure}[h]
\centering
\includegraphics[width=60mm]{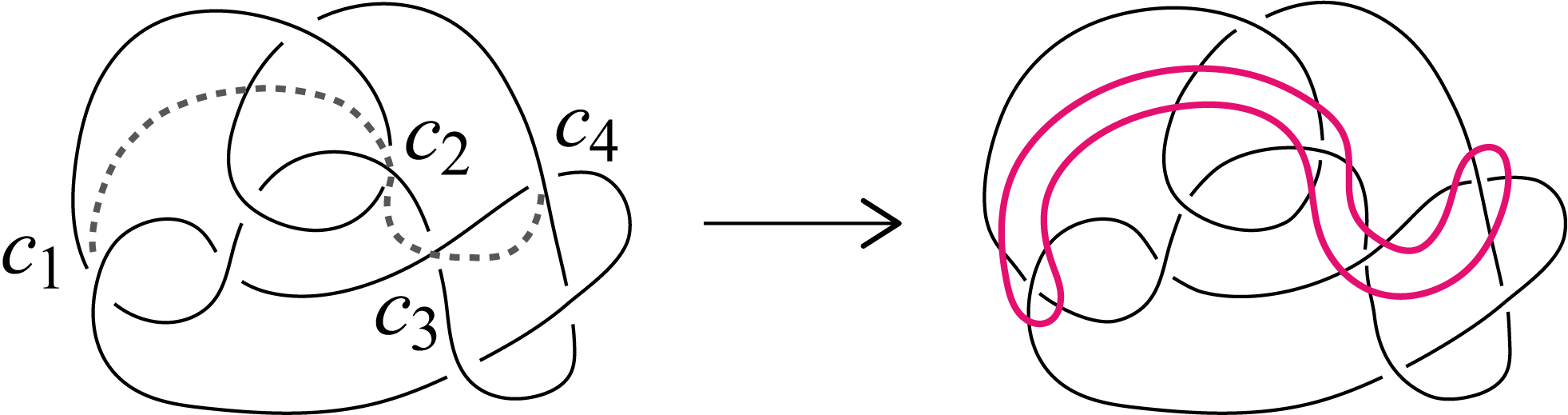}
\caption{Take the boundary of the neighborhood of the path.}
\label{fig-cc-path}
\end{figure}

\vspace{2mm}

\noindent In particular, we have the following.

\vspace{2mm}

\begin{cor}
If $u(D)=1$, then $u_{\textcircled{R}}(D)=1$.
\end{cor}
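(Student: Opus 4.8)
The plan is to sandwich $u_{\textcircled{R}}(D)$ between $1$ and $1$. For the upper bound I would invoke Proposition \ref{prop-uru} directly: since $u_{\textcircled{R}}(D) \le u(D)$ holds for every link diagram, the hypothesis $u(D)=1$ immediately yields $u_{\textcircled{R}}(D) \le 1$. Alternatively, one can reprove this special case via the $k=1$ case in the proof of Proposition \ref{prop-uru}: if $c_1$ is the single crossing whose change transforms $D$ into a trivial-link diagram, take $C$ to be a small circle around $c_1$; then that crossing change is realized by a single RCC on the disc of $C$ containing $c_1$, so $u_{\textcircled{R}}(D,C)=1$ and hence $u_{\textcircled{R}}(D) \le 1$.

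For the lower bound I would show that $u_{\textcircled{R}}(D)=0$ is impossible. By definition, $u_{\textcircled{R}}(D)=0$ means there is a simple closed curve $C$ for which zero RCCs on $D_N$ and $D_S$ already present $D$ as a diagram of the trivial link; but performing zero RCCs changes nothing, so this says $D$ itself is a diagram of the trivial link, i.e. $u(D)=0$. This contradicts $u(D)=1$. Therefore $u_{\textcircled{R}}(D) \ge 1$, and combined with the upper bound we get $u_{\textcircled{R}}(D)=1$.

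There is essentially no obstacle here; the only point needing a line of care is the chain of equivalences $u_{\textcircled{R}}(D)=0 \iff D$ is a trivial-link diagram $\iff u(D)=0$, which is immediate from the definitions of the two unlinking numbers. For completeness one might also remark that $u_{\textcircled{R}}(D)$ is a nonnegative integer which is finite by Proposition \ref{prop-finite}, so the minimum defining it is attained and the above dichotomy ($u_{\textcircled{R}}(D)=0$ or $u_{\textcircled{R}}(D)\ge 1$) is exhaustive.
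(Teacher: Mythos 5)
Your proposal is correct and matches the paper's intent: the corollary is stated as an immediate consequence of Proposition \ref{prop-uru}, which gives $u_{\textcircled{R}}(D)\le u(D)=1$, and the lower bound $u_{\textcircled{R}}(D)\ge 1$ follows just as you say because zero RCCs leave $D$ unchanged, so $D$ would already be a trivial-link diagram, contradicting $u(D)=1$. Your extra care with the lower bound is a nice touch but does not change the route.
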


\vspace{2mm}

\noindent We note that we have lots of examples satisfying $u_{\textcircled{R}}(D)=1$ but $u(D) \neq 1$. 
See Figure \ref{fig-ur-one}. 

\begin{figure}[h]
\centering
\includegraphics[width=80mm]{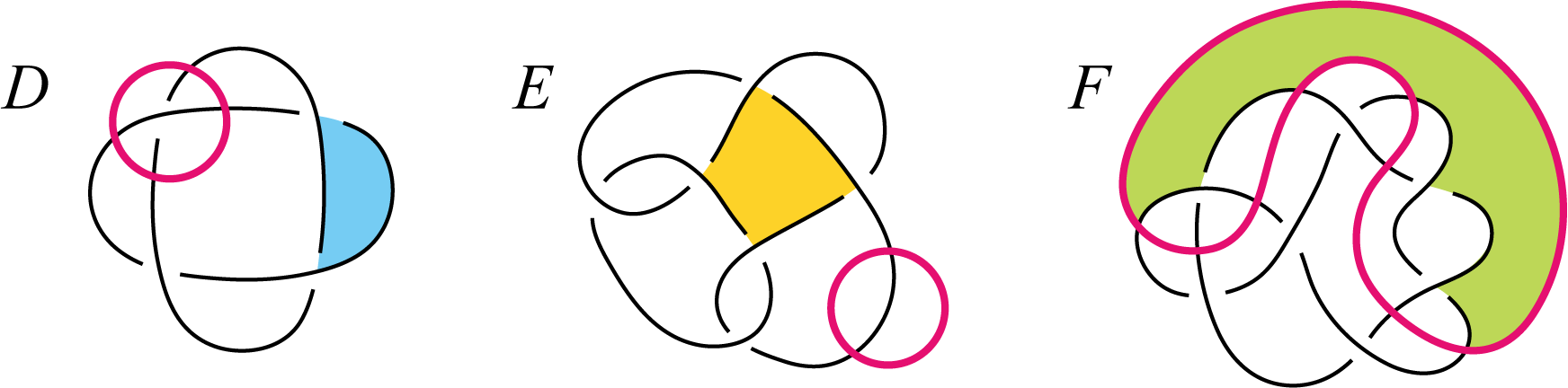}
\caption{The diagrams $D, E$ and $F$ are diagrams of the links $4^2_1$, $6^2_3$ and $7^2_1$, respectively. 
We have $u_{\textcircled{R}}(D)= u_{\textcircled{R}}(E)= u_{\textcircled{R}}(F)=1$ and $u(D)=u(E)=u(F)=2$. 
We note that $4^2_1$ and $6^2_3$ are proper links and $7^2_1$ is an improper link (see, for example, \cite{kohn}).}
\label{fig-ur-one}
\end{figure}

\noindent The inequality $u(D) \leq \frac{c(D)}{2}$ is well-known since the mirror image of a diagram of a trivial link also represents the trivial link. 
We have the following corollary\footnote{For knot diagrams $D$, recently it is shown in \cite{fah} that $u_R(D) \leq \frac{1}{2}(c(D)+1)$ for the classical region unknotting number $u_R(D)$.} from Proposition \ref{prop-uru}. 

\vspace{2mm}

\begin{cor}
The inequality $u_{\textcircled{R}}(D) \leq \dfrac{c(D)}{2}$ holds for each link diagram $D$. 
\end{cor}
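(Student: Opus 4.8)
The plan is to obtain this as an immediate consequence of Proposition~\ref{prop-uru} together with the classical bound $u(D)\le \frac{c(D)}{2}$ recalled just above the statement. First I would invoke Proposition~\ref{prop-uru}, which gives $u_{\textcircled{R}}(D)\le u(D)$ for every link diagram $D$; it then suffices to verify the well-known inequality $u(D)\le \frac{c(D)}{2}$, which I would include for completeness.

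For that, I would first recall that every link diagram can be turned into a diagram of the trivial link by crossing changes: ordering the components and traversing each one from a basepoint so as to pass over at the first visit to each self-crossing and over every lower-ordered component produces a descending diagram, which represents a trivial link. Hence there is a set $A$ of crossings of $D$ whose switching trivializes $D$, so $u(D)$ is finite and $u(D)\le |A|$. Next I would apply the standard mirror trick: switching the complementary set $\bar A$ of crossings amounts to switching all crossings of $D$ (producing the mirror image $\overline{D}$) and then switching $A$, so the resulting diagram is the mirror image of the diagram obtained from $D$ by switching $A$. Since the trivial link is amphichiral, $\bar A$ also trivializes $D$. Because $|A|+|\bar A|=c(D)$, one of $A$ and $\bar A$ has cardinality at most $\frac{c(D)}{2}$, whence $u(D)\le \frac{c(D)}{2}$. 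Chaining this with the first step yields $u_{\textcircled{R}}(D)\le u(D)\le \frac{c(D)}{2}$.

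Since the statement is a genuine corollary, I do not expect any real obstacle. The only point requiring a little care is the justification of the classical bound $u(D)\le \frac{c(D)}{2}$ — namely the existence of a trivializing crossing-change set and the observation that its complement works as well, using the amphichirality of the trivial link — but both of these are entirely standard, so the argument is short.
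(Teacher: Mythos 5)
Your argument is correct and matches the paper's: the corollary is obtained by chaining Proposition~\ref{prop-uru} with the classical bound $u(D)\le \frac{c(D)}{2}$, which the paper simply cites as well-known via the mirror-image observation. Your expanded justification of that classical bound (descending diagram plus the complement/mirror trick) is the standard one and is consistent with what the paper intends.
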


\vspace{2mm}

\noindent Using the method of the ``spur'' used in \cite{FNS} and \cite{rinno}, we prove Theorem \ref{thm-cru-one}.

\vspace{4mm}

\noindent {\it Proof of Theorem \ref{thm-cru-one}.} 
Let $D$ be a diagram of a link $L$ such that $u(D)=n$. 
Take an arbitrary region $R$ of $D$, and draw a circle $C$ in $R$. 
Let $c_1, c_2, \dots c_n$ be $n$ crossings which transform $D$ into a diagram of a trivial link by the crossing changes. 
Draw $n$ paths from $c_i$ to inside $C$ so that they have no crossings each other, and have no intersections with crossing points of $D$. 
Then, move the crossings into the inside of $C$ along the paths passing over arcs of $D$, as shown in Figure \ref{fig-cru-1}. 
Then, we can change the $n$ crossings all together by one RCC in $C$. 
\qed

\begin{figure}[h]
\centering
\includegraphics[width=120mm]{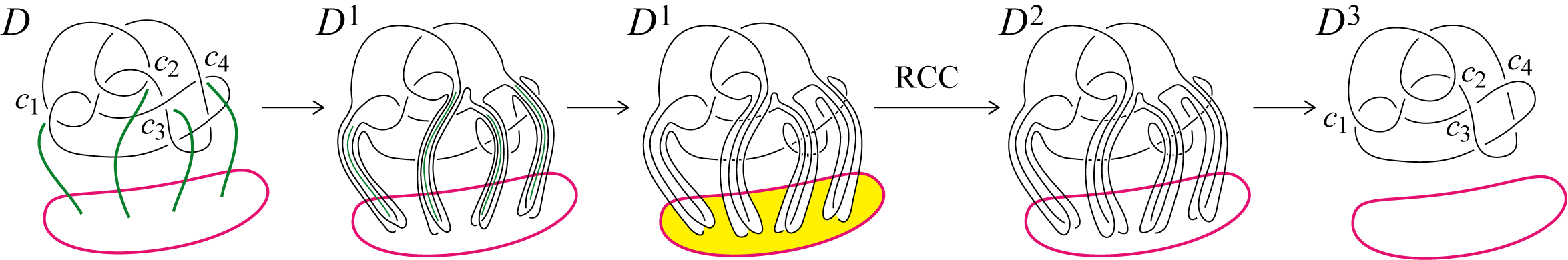}
\caption{For a link diagram $D$, move the crossings along the paths into $C$ to obtain a diagram $D^1$ such that the crossings are changed by a single RCC in $C$. 
After the RCC, we have a diagram $D^2$, which represents the same link to the diagram $D^3$ which is the diagram obtained from $D$ by crossing changes at these crossings. }
\label{fig-cru-1}
\end{figure}

\section*{Acknowledgment}
The second author's work was partially supported by JSPS KAKENHI Grant Number JP21K03263.


\begin{thebibliography}{99}

\bibitem{ahara-suzuki} K. Ahara and M. Suzuki, An integral region choice problem on knot projection, J. Knot Theory Ramifications, {\bf 21} (2012), 1250119.

\bibitem{aida} H. Aida, Unknotting operations of polygonal type, Tokyo J. Math., {\bf 15} (1992), 111-121. 

\bibitem{k-col} A. Batal and N. G\"{u}g\"{u}mc\"{u}, $k$-color region select game, J. Math. Soc. Japan, {\bf 75} (2023), 1409-1429.

\bibitem{cheng-gao} Z. Cheng and H. Gao, On region crossing change and incidence matrix, Science China Mathematics, {\bf 55} (2012), 1487--1495.

\bibitem{cheng} Z. Cheng, When is region crossing change an unknotting operation?, Math. Proc. Cambridge Philos. Soc., {\bf 155} (2013), 257--269. 

\bibitem{fah} D. Chumpungam and A. Shimizu, Boolean algebra on region crossing change and an inequality of the region unknotting number, to appear in Tokyo J. Math.

\bibitem{FNS} Y. Funakoshi, K. Noguchi and A. Shimizu, Unknottability of spatial graphs by region crossing changes, Osaka J. Math., {\bf 60} (2023), 671--682. 

\bibitem{fukushima} Y. Fukushima, Integral region choice problem with some restrictions (in Japanese), Kindai University, 2023, master's thesis. 

\bibitem{jong} Y. Fukushima and I. D. Jong, An integral region choice problem with two prohibited regions, preprint. 

\bibitem{HSS} K. Hayano, A. Shimizu and R. Shinjo, Region crossing change on spatial-graph diagrams, J. Knot Theory Ramifications, {\bf 24} (2015), 1550045. 

\bibitem{hull} T. Hull, Origametry--mathematical methods in paper folding, Cambridge University Press, Cambridge, 2021. 

\bibitem{kohn} P. Kohn, Unlinking two component links, Osaka J. Math., {\bf 30} (1993), 741--752.

\bibitem{rims} A. Shimizu, On region unknotting numbers, RIMS Kokyuroku, {\bf 1766} (2011), 15--22.

\bibitem{rcc-uo} A. Shimizu, Region crossing change is an unknotting operation, J. Math. Soc. Japan, {\bf 66} (2014), 693--708.

\bibitem{rinno} A. Shimizu and R. Takahashi, Region crossing change on spatial theta-curves, J. Knot Theory Ramifications, {\bf 29} (2020), 2050028.

\end{thebibliography}
\end{document}